\date{}
\newcommand{\ep}{\varepsilon}
\newcommand{\re}{\mathbb{R}}
\newtheorem{thm}{Theorem}[section]
\newtheorem{rmk}[thm]{Remark}
\newtheorem{prop}[thm]{Proposition}
\newtheorem{defn}[thm]{Definition}
\newtheorem{lemma}[thm]{Lemma}
\newtheorem{open}{Open problem}
\title{{\bf Universal bounds for a class of second order evolution equations and applications}}
\author{Marina Ghisi\vspace{1ex}\\ 
{\normalsize Universit\`a degli Studi di Pisa} \\
{\normalsize Dipartimento di Matematica}\\ 
{\normalsize PISA (Italy)}\\
{\normalsize e-mail: \texttt{marina.ghisi@unipi.it}}
\and
Massimo Gobbino\vspace{1ex}\\ 
{\normalsize Universit\`a degli Studi di Pisa} \\
{\normalsize Dipartimento di Ingegneria Civile e Industriale}\\ 
{\normalsize PISA (Italy)}\\  
{\normalsize e-mail: \texttt{massimo.gobbino@unipi.it}}
\and
Alain Haraux\vspace{1ex}\\ 
{\normalsize Sorbonne Universit\'e, Universit\'e Paris-Diderot SPC, CNRS, INRIA}, \\
{\normalsize Laboratoire Jacques-Louis Lions,  LJLL, F-75005,
Paris, France.}\\ 
{\normalsize e-mail: \texttt{haraux@ann.jussieu.fr}}}
\numberwithin{equation}{section}
\begin{document}\maketitle

\begin{abstract} 

We consider a class of abstract second order evolution equations with a restoring force that is strictly superlinear at infinity with respect to the position, and a dissipation mechanism that is strictly superlinear at infinity with respect to the velocity.

Under the assumption that the growth of the restoring force dominates the growth of the dissipation, we prove a universal bound property, namely that the energy of solutions is bounded for positive times, independently of the initial condition.  Under a slightly stronger assumption, we show also a universal decay property, namely that the energy decays (as time goes to infinity) at least as a multiple of a negative power of $t$, again independent of the boundary conditions.

We apply the abstract results to solutions of some nonlinear wave, plate and Kirchhoff equations in a bounded domain. 
\vspace{6ex}

\noindent{\bf Mathematics Subject Classification 2010 (MSC2010):} 
35B40, 35L70, 35L75, 35Q74, 35L90.

		
\vspace{6ex}

\noindent{\bf Key words:} second order evolution equation, universal bound, decay estimates.

\end{abstract}

\section {Introduction}
 
It is well known that a certain number of dynamical systems $S(t)$ defined on a Banach space $X$ have the property of universal boundedness for all $t>0$, in the sense that 
\begin{equation*} 
\forall t>0,
\qquad
S(t) X \hbox { is a bounded subset of } X.
\end{equation*}  

As a simple example we can consider the first order scalar ordinary differential equation 
\begin{equation*}  
u' + \delta |u|^{\rho} u = 0, 
\end{equation*}  
where $\delta$ and $\rho$ are positive real numbers. Indeed, integrating this differential equation we find that 
\begin{equation*}
|u(t)|\leq\frac {1}{(\rho\delta)^{1/\rho}}\cdot\frac {1}{t^{1/\rho}}
\qquad
\forall t>0, 
\end{equation*}
independently of the initial condition $u(0)$. This property extends classically to some classes of nonlinear parabolic partial differential equations, for instance the semilinear parabolic equation such as
$$u_{t}-\Delta u + \delta |u|^{\rho} u = 0$$  
with either Dirichlet or Neumann homogeneous boundary conditions. The result follows at once from the maximum principle. We refer to~\cite{1975-SNS-Simon} for a more elaborate quasilinear case.
\bigskip

It is natural to ask whether an analogous universal bound property holds true for second order ordinary differential equations with superlinear dampings such as  
\begin{equation*}  
u''+ \omega^2u + \delta |u'|^{\rho} u' = 0. 
\end{equation*}

The answer is in general negative. Indeed, in the special case $\omega=\rho=\delta=1$, this equation has a solution with explicit expression $u(t)=t^{2}/4-1/2$ for $t\leq 0$ that extends uniquely for $t\geq 0$. Due to the autonomous character of the equation, the entire range of this unbounded solution is contained in $S(t)\re^2$ for all positive times, and hence the universal boundedness fails.

The next step  is to consider scalar second order equations with both superlinear damping and superlinear restoring force, such as
\begin{equation}
u'' + |u'|^{\alpha}u' + |u|^\beta u=0.
\label{ode:souplet}
\end{equation} 

For this equation, P.~Souplet~\cite{1998-DIE-Souplet} gave a definitive negative answer in the regime $\alpha\ge \beta\ge 0$. On the other hand, in the regime $0< \alpha < \beta$, it was shown very recently in~\cite{A-H} that the universal boundedness holds. More precisely, if we consider the classical energy
\begin{equation}
E(t) = \frac{1}{2}u'(t)^{2} + \frac{1}{\beta +2}|u(t)|^{\beta +2}, 
\nonumber
\end{equation}
the method of \cite{A-H}  yields the optimal estimate      
\begin{equation}
E(t)\leq C \max \left\{t ^{-2/\alpha},t^{-(\alpha+1)(\beta + 2)/(\beta -\alpha)}\right\}
\qquad
\forall t>0,
\nonumber
\end{equation} 
where $C$ does not depend on the initial data. 

After this result, which can be easily extended in the finite dimensional vector framework,  it is reasonable to ask what happens in the regime $0<\alpha<\beta$ for wave equations such as
\begin{equation}
\label{wave}
u_{tt}  -\Delta u + |u|^\beta u + |u_{t}|^{\alpha}u_{t}=0, 
\end{equation} 
with either Dirichlet or Neumann boundary conditions, or for analogous plate equations where the Laplacian is replaced by a bi-Laplacian. The issue seems to be non-trivial because there is no such maximum principle as in the parabolic case, and because without the nonlinear term $|u|^\beta u$ the universal boundedness does not take place (see~\cite{1994-JMPA-Carpio}).\bigskip

Nevertheless, a natural slight change of the method of~\cite{A-H}, inspired by a technique devised in~\cite{Rio} involving a power of the total energy, gives the result for a large class of equations that fit into a natural functional framework. For these equations, we prove in Theorem~\ref{thm:ubp} the universal bound for all positive times, and in Theorem~\ref{thm:ud1} and Theorem~\ref{thm:ud2} the universal decay at infinity under slightly stronger assumptions. It should be even possible to extend the universal bound and universal decay properties to some singular equations and systems such as those studied in~\cite{AH, A^2H}, at least in the finite dimensional case.
\bigskip

The plan of the paper is as follows. In section~\ref{sec:functional} we introduce the general functional setting and we list the assumptions that we need in the sequel. In section~\ref{sec:abstract} we state and prove our abstract results. In section~\ref{sec:pde} we show some examples of application of our abstract theory to partial differential equations. Finally, in section~\ref{sec:open} we present some negative results and open problems. 
 

\section{Functional setting}\label{sec:functional}

Let $H$ be a Hilbert space, and let $V$ be another Hilbert space continuously imbedded into $H$ as a dense subspace. If we identify $H$ with its dual $H'$, we obtain a classical Hilbert triple $V\subseteq H\subseteq V'$. We denote norms by double bars, and scalar products and duality pairings by angle brackets.

Let $T>0$ be a real number. In the time interval $[0,T]$ or $(0,T)$ we consider evolution equations of the form
\begin{equation}
u''(t)+\nabla F(u(t))+g(t,u'(t))=0,
\label{eqn:abstract}
\end{equation}
where $F$ and $g$ satisfy the following assumptions.

\begin{itemize}

\item[(F1)]
The function $F:V\to\re$ is of class $C^{1}$, and $\nabla F\in C^{0}(V,V')$ is its gradient.

\item[(G1)]
The function $g:(0,T)\times V\to V'$ is such that for every $v\in L^{\infty}((0,T),V)$ the function $t\to g(t,v(t))$ belongs to $L^{1}((0,T),V')$.

\end{itemize}

Under these assumptions we can introduce a notion of strong solutions to (\ref{eqn:abstract}).
\begin{defn}[Strong solutions]\label{defn:strongsol} 
\begin{em}

A \emph{strong solution} to (\ref{eqn:abstract}) is a function
\begin{equation}
u\in W^{1,\infty}((0,T),V)\cap W^{2,1}((0,T),V')
\nonumber
\end{equation}
for which (\ref{eqn:abstract}) holds true as an equality in $L^{1}((0,T),V')$.

\end{em}
\end{defn}

\begin{rmk}
\begin{em}

Every strong solution belongs in particular to the class
$$C^{0}([0, T],V)\cap C^{1}([0,T],H),$$
and as a consequence the pointwise values $u(t)\in V$ and $u'(t)\in H$ are well defined for every $t\in[0,T]$ (endpoints included). Moreover, the classical energy
\begin{equation}
E_0(t):=\frac{1}{2}\|u'(t)\|_{H}^{2}+F(u(t))
\label{defn:E}
\end{equation}
belongs to $W^{1,\infty}((0,T),\re)$ and
\begin{equation}
E_0'(t)=-\langle g(t,u'(t)),u'(t)\rangle_{V',V}
\label{eqn:E'}
\end{equation}
for almost every $t\in(0,T)$.

\end{em}
\end{rmk} 

Now we assume that $\alpha$ and $\beta$ are two positive real numbers, and that $X$ and $Y$ are two Banach spaces that extend the original Hilbert triple $V\subseteq H\subseteq V'$ to a chain of seven spaces with continuous imbeddings
\begin{equation}
V\subseteq Y\subseteq X\subseteq H\subseteq X'\subseteq Y'\subseteq V'.
\label{7-spaces}
\end{equation}

The following additional assumptions on $F$ and $g$ are needed in our abstract result concerning the uniform bound property (see Theorem~\ref{thm:ubp}).
\begin{itemize}

\item[(F2)]
There exist real numbers $\delta_{1}>0$ and $C_{1}\geq 0$ such that
\begin{equation}
F(u)\geq\delta_{1}\|u\|_Y^{\beta+2}-C_1, 
\qquad
\forall u\in V.
\label{hp:F2}
\end{equation}

\item[(F3)]
There exist real numbers $\delta_{2}>0$ and $C_{2}\geq 0$ such that
\begin{equation} 
\langle \nabla F(u), u \rangle_{V',V} \geq \delta_{2} F(u) -C_2 .
\qquad
\forall u\in V,
\label{hp:F3}
\end{equation}

\item[(G2)]
There exist real numbers $\delta_{3}>0$ and $C_{3}\geq 0$ such that
\begin{equation}
\langle g(t, v), v\rangle_{V', V} \geq \delta_{3}\|v\|_X^{\alpha+2}-C_3
\label{hp:G2}
\end{equation}
for every $v\in V$ and almost every $t\in(0,T)$.

\item[(G3)] For every $(t,v)\in(0,T)\times V$ it turns out that $g(t,v)\in X'$, and there exist  real numbers $C_{4}\geq 0$ and $D_{4}>0$ such that
\begin{equation}
\|g(t, v)\|_{X'}\leq{D_4}\|v\|_X^{\alpha+1}+C_{4}
\label{hp:G3}
\end{equation}
for every $v\in V$ and almost every $t\in(0,T)$.

\end{itemize}

The assumptions above are needed, in the simplified version with all $C_{i}$'s equal to zero, also in our first version of the uniform decay property (see Theorem~\ref{thm:ud1}). In our second version of the uniform decay property (see Theorem~\ref{thm:ud2}) we need two further assumptions, namely that there exists a real number ${C_5}>0$ such that
\begin{equation}
\|u\|_{X}^{\alpha+2}\leq{C_5}\left(\|u\|_{H}^{2}+\|u\|_{Y}^{\beta+2}\right)
\qquad
\forall u\in V,
\label{hp:norms}
\end{equation} 
and that we can estimate $F(u)$ from below as follows.
\begin{itemize}

\item[(F4)]  There exists a real number $\delta_{4}>0$ such that
\begin{equation}
F(u)\geq\delta_{4}\|u\|_{H}^{2}
\qquad
\forall u\in V.
\label{hp:F4}
\end{equation}

\end{itemize}

\begin{rmk}
\begin{em}

The notion of strong solutions, the computation of the time-derivative of the energy in (\ref{eqn:E'}), as well as all the assumptions introduced above, can be extended in a standard way to solutions defined in the half-line $[0,+\infty)$ or $(0,+\infty)$.
\end{em}
\end{rmk} 


\section{Abstract results}\label{sec:abstract}

\subsection{Universal bound for all positive times}

In this section we prove that, for all positive times, solutions lie in a bounded subset of the phase space. We shall use the  following simple  universal bound property for a class of  differential inequalities which was stated and proved in~\cite[Lemma~III.5.1]{Temam}, with a reference to J.M.~Ghidaglia.

\begin{lemma}\label{lemma:ubp}

Let $T>0$ be a positive real number, and let $\Phi\in W^{1,\infty}((0,T),\re)$ be a nonnegative function. Let us assume that there exist positive real numbers $\rho$, $\gamma$ and $M$ such that
\begin{equation}
\Phi'(t)\leq -\rho\,\Phi(t)^{1+\gamma}+M
\nonumber
\end{equation}
for almost every $t\in (0,T)$.Then we have 
\begin{equation}
\Phi(t)\le\left(\frac{1}{\gamma\rho t}\right)^\frac{1}{\gamma} + \left(\frac{M}{\rho}\right)^\frac{1}{1+\gamma} 
\qquad
\forall t\in(0,T).
\nonumber
\end{equation}
\end{lemma}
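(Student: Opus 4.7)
The plan is to exhibit the function $z(t):=(\gamma\rho t)^{-1/\gamma}+K$, with $K:=(M/\rho)^{1/(1+\gamma)}$, as a classical super-solution of the differential inequality, and then to invoke a comparison argument to conclude $\Phi\leq z$ on $(0,T)$. The choice of $z$ is natural because $y(t):=(\gamma\rho t)^{-1/\gamma}$ is the maximal solution of the homogeneous ODE $y'=-\rho y^{1+\gamma}$ blowing up at $t=0^{+}$, while $K$ is the unique nonnegative root of the right-hand side $h(x):=-\rho x^{1+\gamma}+M$; summing the two contributions should absorb the inhomogeneous term $M$.

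Verifying that $z$ is a super-solution is a one-line computation: $z'(t)=y'(t)=-\rho y(t)^{1+\gamma}$, and the elementary superadditivity $(y+K)^{1+\gamma}\geq y^{1+\gamma}+K^{1+\gamma}$ (valid because $1+\gamma\geq 1$ and both quantities are nonnegative), combined with $\rho K^{1+\gamma}=M$, yields
$$-\rho z(t)^{1+\gamma}+M\leq -\rho y(t)^{1+\gamma}-\rho K^{1+\gamma}+M=-\rho y(t)^{1+\gamma}=z'(t).$$

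For the comparison I would rely on the fact that $h$ is non-increasing on $[0,+\infty)$. Fix $\epsilon\in(0,T)$ so small that $z(\epsilon)>\|\Phi\|_{L^{\infty}(0,T)}$; then $w:=\Phi-z$ satisfies $w(\epsilon)<0$. On each connected component $(a,b)$ of the relatively open set $\{t\in(\epsilon,T):w(t)>0\}$, continuity forces $w(a)=0$, while $\Phi(t)>z(t)\geq 0$ together with the monotonicity of $h$ yields $w'(t)\leq h(\Phi(t))-h(z(t))\leq 0$ almost everywhere on $(a,b)$; integration then gives $w\leq w(a)=0$ on $(a,b)$, contradicting $w>0$. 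Thus the set in question is empty, so $\Phi\leq z$ on $[\epsilon,T)$, and letting $\epsilon\to 0^{+}$ completes the argument.

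The main obstacle I anticipate is precisely this comparison step: in the merely $W^{1,\infty}$ setting, a direct appeal to classical ODE uniqueness is not available, and one must justify the argument through the connected-component analysis sketched above, which leans crucially on the monotonicity of $h$ rather than on a Lipschitz estimate (the latter would fail uniformly as $\Phi$ could a priori be unbounded at $t=0$). Everything else reduces to the one-line super-solution verification.
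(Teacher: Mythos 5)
Your proposal is correct and follows the same route as the paper: the paper's proof exhibits exactly the same supersolution $\Psi(t)=(\gamma\rho t)^{-1/\gamma}+(M/\rho)^{1/(1+\gamma)}$, checks $\Psi'+\rho\Psi^{1+\gamma}\geq M$, and concludes from the standard comparison principle together with $\Psi\geq\Phi$ as $t\to 0^{+}$. Your superadditivity computation and the connected-component argument (using that $x\mapsto -\rho x^{1+\gamma}+M$ is non-increasing on $[0,+\infty)$) are simply a correct, fully detailed implementation of the comparison step that the paper leaves implicit.
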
 \begin{proof} This follows from the standard comparison principle since the function $$ \Psi(t)=\left(\frac{1}{\gamma\rho t}\right)^\frac{1}{\gamma} + \left(\frac{M}{\rho}\right)^\frac{1}{1+\gamma} $$ 
satisfies the inequality
$$ \Psi'(t)+ \rho \Psi(t)^{1+\gamma} \ge M. $$ 
 
Hence $ \Psi(t)\ge\Phi(t)$ for every $t\in(0,T)$ since $\Psi(t) \ge \Phi(t) $ for $t\to 0^+$. \end{proof}

We are now ready to state and prove the universal bound property for  the solutions to (\ref{eqn:abstract}) in the regime $0<\alpha<\beta$. We point out that in our main results below, as well as in Lemma~\ref{lemma:ubp} above, we do not ask solutions to be defined for $t=0$. In some sense, our estimates are universal because they do not depend on initial data, but even better because they do not even require initial data.

\begin{thm}[Universal bound property]\label{thm:ubp}

Let us consider the chain of functional spaces (\ref{7-spaces}), and let $F$ and $g$ be two functions satisfying assumptions (F1)--(F2)--(F3) and (G1)--(G2)--(G3) of section~\ref{sec:functional}. 

Let $T>0$ be a real number, and let $u:(0,T)\to V$ be a strong solution to (\ref{eqn:abstract}) according to Definition~\ref{defn:strongsol}.

Let us assume in addition that $0<\alpha<\beta$, and let us set
\begin{equation}
\gamma:=\min\left\{\frac{\alpha}{2},\frac{\beta -\alpha}{(\alpha+1)(\beta + 2)}\right\}.
\label{defn:gamma}
\end{equation}

Then there exist two real numbers ${\Gamma}$ and $\Gamma_{\!*}$ such that
\begin{equation}
\|u'(t)\|_{H}^{2}+F(u(t))\leq{\Gamma}\,t^{-1/\gamma}+\Gamma_{\!*}
\qquad
\forall t\in(0,T).
\label{th:ubp}
\end{equation}

The constants ${\Gamma}$ and $\Gamma_{\!*}$ depend on the immersions (\ref{7-spaces}), and on the constants that appear in (\ref{hp:F2}) through (\ref{hp:G3}), but they are independent of $T$ and $u$.

\end{thm}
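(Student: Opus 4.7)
The plan is to construct a Lyapunov-type functional $\Phi(t)$, comparable to a positive power of $E_0(t)$ up to additive constants, whose time-derivative satisfies a differential inequality of the form $\Phi'(t) \leq -\rho\,\Phi(t)^{1+\tilde{\gamma}} + M$; Lemma~\ref{lemma:ubp} then yields the desired bound (\ref{th:ubp}) via the equivalence. Following the technique from~\cite{Rio} mentioned in the Introduction, I take
\begin{equation*}
\Phi(t) = E_0(t)^{1+\gamma} + \eta\, E_0(t)^{\gamma}\,\langle u'(t),u(t)\rangle_H
\end{equation*}
with $\gamma$ as in (\ref{defn:gamma}) and a small parameter $\eta>0$. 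The factor $E_0^{\gamma}$ multiplying the cross term is crucial: it promotes the linear position dissipation $-F(u)$ coming from (F3) into a polynomial dissipation $-E_0^{\gamma}F(u)$, comparable to $-E_0^{1+\gamma}$ when the potential energy dominates. A preliminary check, using (F2) and $Y\subseteq H$ to bound $\|u\|_H\leq c(F(u)+C_1)^{1/(\beta+2)}$, shows $|E_0^{\gamma}\langle u',u\rangle_H|\leq c'\,E_0^{\gamma+1/2+1/(\beta+2)}$, which is of lower order than $E_0^{1+\gamma}$ for any $\beta>0$; hence $\Phi\sim E_0^{1+\gamma}$ up to additive constants.

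For the derivative $\Phi'(t)$, the contribution of $E_0^{1+\gamma}$ gives, via (\ref{eqn:E'}) and (G2), the main dissipation $-(1+\gamma)\delta_3\,E_0^{\gamma}\|u'\|_X^{\alpha+2}+(1+\gamma)C_3\,E_0^{\gamma}$. Differentiating the weighted cross term produces $\eta E_0^{\gamma}(\|u'\|_H^2-\langle\nabla F(u),u\rangle-\langle g(t,u'),u\rangle)$ plus a junk term $\eta\gamma\, E_0^{\gamma-1}E_0'\langle u',u\rangle_H$: hypothesis (F3) yields $-\langle\nabla F(u),u\rangle\leq -\delta_2 F(u)+C_2$, while (G3) combined with Young's inequality (conjugate exponents $(\alpha+2)/(\alpha+1)$ and $\alpha+2$) gives $|\langle g(t,u'),u\rangle|\leq \kappa\|u'\|_X^{\alpha+2}+C(\kappa)\|u\|_X^{\alpha+2}+C'$ for any $\kappa>0$. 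The residual terms are absorbed into the dissipation: $\|u'\|_H^2$ (together with the analogous part of the junk term) into $\|u'\|_X^{\alpha+2}$ via $X\subseteq H$ and Young (valid since $\alpha>0$), and $\|u\|_X^{\alpha+2}$ into $F(u)$ via $Y\subseteq X$, (F2), and Young (valid since $\alpha<\beta$). For $\eta$ small enough one obtains
\begin{equation*}
\Phi'(t) \leq -\rho_1\, E_0^{\gamma}\|u'\|_X^{\alpha+2} - \rho_2\, E_0^{\gamma} F(u) + M_0.
\end{equation*}

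The two branches of $\gamma$ in (\ref{defn:gamma}) arise from the two dissipative regimes. In the kinetic regime $\|u'\|_H^2\gtrsim F(u)$, the embedding $X\subseteq H$ gives $\|u'\|_X^{\alpha+2}\geq c\,(\|u'\|_H^2)^{1+\alpha/2}\geq c'\,E_0^{1+\alpha/2}$, so that $E_0^{\gamma}\|u'\|_X^{\alpha+2}\geq c''\,E_0^{1+\gamma+\alpha/2}$; matching this against $\Phi^{(1+2\gamma)/(1+\gamma)}\sim E_0^{1+2\gamma}$ forces $\gamma\leq\alpha/2$. In the potential-dominated regime, the interpolation $\|u\|_X^{\alpha+2}\leq C(F(u)+C_1)^{(\alpha+2)/(\beta+2)}$ (from (F2) and $Y\subseteq X$), combined with the Young split of $\langle g,u\rangle$, forces $\gamma\leq(\beta-\alpha)/((\alpha+1)(\beta+2))$. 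Taking $\gamma$ to be the minimum of the two yields $\Phi'(t)\leq -\rho\,\Phi(t)^{1+\gamma/(1+\gamma)}+M$; Lemma~\ref{lemma:ubp} then gives $\Phi(t)\leq C\,t^{-(1+\gamma)/\gamma}+C'$, and the equivalence $\Phi\sim E_0^{1+\gamma}$ translates this into $E_0(t)\leq \Gamma t^{-1/\gamma}+\Gamma_{\!*}$.

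The main technical obstacle is the control of the junk term $\eta\gamma\, E_0^{\gamma-1}E_0'\langle u',u\rangle_H$: using $|E_0'|\leq D_4\|u'\|_X^{\alpha+2}+C_4\|u'\|_X$ from (G3) together with $|\langle u',u\rangle_H|\lesssim E_0^{1/2+1/(\beta+2)}$, it is bounded by a multiple of $E_0^{\gamma-1/2+1/(\beta+2)}\|u'\|_X^{\alpha+2}$, which is absorbable into the main term $-E_0^{\gamma}\|u'\|_X^{\alpha+2}$ precisely when $\gamma\leq\beta/(2(\beta+2))$. An algebraic verification confirms that the exponent (\ref{defn:gamma}) satisfies this constraint under the standing hypothesis $\alpha<\beta$ for both branches, ensuring that the whole construction is consistent.
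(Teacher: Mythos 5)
Your functional has the right ingredients (the weight $E^{\gamma}$ on the cross term, the Ghidaglia-type Lemma~\ref{lemma:ubp}), but the choice of leading term $E_0^{1+\gamma}$ instead of $E_0$ creates a genuine gap in the potential-dominated regime. After all your absorptions the strongest inequality you have established is
\begin{equation*}
\Phi'(t)\ \le\ -\rho_1\,E_0^{\gamma}\|u'\|_X^{\alpha+2}\ -\ \rho_2\,E_0^{\gamma}F(u)\ +\ M_0 ,
\end{equation*}
and you then assert $\Phi'\le-\rho\,\Phi^{1+\gamma/(1+\gamma)}+M$, i.e.\ a dissipation of size $E_0^{1+2\gamma}$ (since $\Phi\sim E_0^{1+\gamma}$). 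In the kinetic regime this matching is fine and gives your constraint $\gamma\le\alpha/2$; but when the potential energy dominates (think of $u'=0$ and $F(u)$ large, so the cross term vanishes) the only negative term available is $E_0^{\gamma}F(u)\sim E_0^{1+\gamma}\sim\Phi$, which is merely \emph{linear} in $\Phi$, whereas you need the exponent $(1+2\gamma)/(1+\gamma)>1$. A linear inequality $\Phi'\le-\rho\Phi+M$ only yields $\Phi(t)\le\Phi(0^+)e^{-\rho t}+M/\rho$, which depends on the data: the universal character of the bound is lost exactly where it matters. Your paragraph on the potential regime addresses a different point (absorbing $\|u\|_X^{\alpha+2}$ into $F(u)$), and in your scheme, where the velocity dissipation carries the weight $E_0^{\gamma}$, that absorption only needs $\alpha\le\beta$ and does not generate the second branch of (\ref{defn:gamma}); the required domination of $E_0^{1+2\gamma}$ is never verified, and it cannot be obtained from your intermediate estimate. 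A secondary defect: (F2) only gives $F(u)\ge-C_1$, so $E_0$ may vanish or be negative, and $E_0^{\gamma}$, $E_0^{\gamma-1}$ are then ill defined; some normalization is mandatory.

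The paper's proof repairs both points at once by keeping the leading term linear: it works with $E:=\frac12\|u'\|_H^2+F(u)+C_1+1\ge1$ and $\Phi:=E+\ep E^{\gamma}\langle u,u'\rangle_H$, so that $\Phi\sim E$. Then $-\ep E^{\gamma}\langle\nabla F(u),u\rangle\le-\ep\delta_2E^{1+\gamma}+\cdots$ is genuinely superlinear relative to $\Phi$, while the velocity dissipation enters \emph{unweighted} through $E'=-\langle g(t,u'),u'\rangle\le-\delta_3\|u'\|_X^{\alpha+2}+C_3$. It is against this unweighted $\|u'\|_X^{\alpha+2}$ that one must absorb $E^{\gamma}\|u\|_X\|u'\|_X^{\alpha+1}$, and this is where the second branch really enters, via $E^{\gamma}\|u\|_X\le K E^{\gamma+1/(\beta+2)}\le K E^{(\gamma+1)/(\alpha+2)}$ (equivalent to $\gamma\le(\beta-\alpha)/((\alpha+1)(\beta+2))$) followed by Young; likewise $E^{\gamma}\|u'\|_X^{2}$ is handled using $\gamma\le\alpha/2$. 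One then gets $\Phi'\le-\rho\Phi^{1+\gamma}+M$ and Lemma~\ref{lemma:ubp} gives (\ref{th:ubp}) directly, with no need to take fractional powers of a possibly nonpositive energy. I suggest you rewrite your argument with this normalization; most of your individual estimates (the Young splits, the interpolation through (F2), the treatment of the term $E^{\gamma-1}E'\langle u,u'\rangle$) then carry over essentially unchanged.
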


\begin{proof}

To begin with, we introduce the energy
\begin{equation}
{E(t)}:=\frac{1}{2}\|u'(t)\|_{H}^{2}+F(u(t))+C_{1}+1.
\label{defn:Ehat}
\end{equation}

This energy coincides with the energy $E_0(t)$ defined in (\ref{defn:E}) up to an additive constant, and therefore its time-derivative is again given by the right-hand side of (\ref{eqn:E'}). Due to assumption~(\ref{hp:F2}), this new energy is bounded from below by~1, and therefore we can define the modified energy
\begin{equation}
\Phi(t):={E(t)}+\ep{E(t)}^{\gamma}\langle u(t),u'(t)\rangle_{H},
\nonumber
\end{equation}
where $\gamma$ is defined by (\ref{defn:gamma}), and $\ep>0$ is a small parameter. 

We claim that, for every $\ep>0$ small enough, the modified energy $\Phi$ has the following two properties.
\begin{itemize}

\item  It is a small perturbation of $E$ in the sense that
\begin{equation}
\frac{1}{2}{E(t)}\leq\Phi(t)\leq\frac{3}{2}{E(t)}
\qquad
\forall t\in(0,T).
\label{th:Ehat-Phi}
\end{equation}

\item  It satisfies the differential inequality 
\begin{equation}
\Phi'(t)\leq-\ep\frac{\delta_{2}}{8}\left(\frac{2}{3}\right)^{\gamma+1}\Phi(t)^{\gamma+1}+\frac{3C_{3}}{2}+2
\label{th:est-Phi'}
\end{equation}
for almost every $t\in(0,T)$, where $\delta_{2}$ and $C_{3}$ are the constants that appear in (\ref{hp:F3}) and (\ref{hp:G2}), respectively.
\end{itemize}

The smallness of $\ep$ depends only on the norms of the continuous imbeddings (\ref{7-spaces}), and on all the constants in (\ref{hp:F2}) through (\ref{hp:G3}), but it does not depend on $T$ and $u$. All the constants $K_{1}$, \ldots, $K_{18}$ that we introduce in the sequel of the proof have the same property.

If we prove the two claims above, then it is enough to select an admissible value of $\ep>0$, and from (\ref{th:Ehat-Phi}), (\ref{th:est-Phi'}), and the conclusion of Lemma~\ref{lemma:ubp} we deduce for some positive constants $\Gamma_1$, $\Gamma_2$
\begin{equation}
{E(t)}\leq 2\Phi(t)\leq 2\Gamma_{1}\,t^{-1/\gamma}+2\Gamma_{2},
\nonumber
\end{equation}
which implies (\ref{th:ubp}).

\subparagraph{\textmd{\textit{Equivalence of the energies}}}

We show that (\ref{th:Ehat-Phi}) holds true whenever $\ep>0$ is small enough. To begin with, we observe that, when $\alpha$ varies in $(0,\beta)$, the value $\gamma$ defined by (\ref{defn:gamma}) is the minimum between an increasing and a decreasing function of $\alpha$. The two functions coincide when $\alpha=\beta/(\beta+2)$, and in this case $\gamma$ attains its maximum. This proves that $\gamma\leq\beta/(2\beta+4)$, and in particular
\begin{equation}
\gamma+\frac{1}{\beta+2}+\frac{1}{2}\leq 1.
\label{ineq:gamma-beta}
\end{equation}

Now from (\ref{defn:Ehat}) we deduce that
\begin{equation}
\|u'(t)\|_{H}\leq\left[2{E(t)}\right]^{1/2},
\nonumber
\end{equation}
while from the continuous imbedding $Y\subseteq H$ and (\ref{hp:F2}) we deduce that
\begin{equation}
\|u(t)\|_{H}\leq K_{1}\|u(t)\|_{Y}\leq K_{2}\left(F(u(t))+C_{1}\right)^{1/(\beta+2)}\leq K_{2}{E(t)}^{1/(\beta+2)},
\nonumber
\end{equation}
and therefore
\begin{equation}
{E(t)}^{\gamma}\cdot|\langle u(t),u'(t)\rangle_{H}|\leq{E(t)}^{\gamma}\cdot\|u(t)\|_{H}\cdot\|u'(t)\|_{H}\leq K_{3}{E(t)}^{\gamma+(1/2)+1/(\beta+2)}.
\nonumber
\end{equation}

Recalling that ${E(t)}\geq 1$, and keeping (\ref{ineq:gamma-beta}) into account, we conclude that
\begin{equation}
{E(t)}^{\gamma}\cdot|\langle u(t),u'(t)\rangle_{H}|\leq K_{3}{E(t)}
\label{est:Phi-interaction}
\end{equation}
so that (\ref{th:Ehat-Phi}) holds true whenever $K_{3}\ep\leq 1/2$.

\subparagraph{\textmd{\textit{Differential inequality for the modified energy}}}

We show that $\Phi$ satisfies (\ref{th:est-Phi'}) when $\ep>0$ is small enough. To begin with, we compute the time-derivative of $\Phi$, and we write it in the form
\begin{eqnarray}
\Phi'(t) & = &  -\langle g(t,u'(t)),u'(t)\rangle_{V',V}\left(1+\gamma\ep{E(t)}^{\gamma-1}\langle u(t),u'(t)\rangle_{H}\right) 
\nonumber  \\[0.5ex]
  &  &  \mbox{}+\ep{E(t)}^{\gamma}\cdot\left(\|u'(t)\|_{H}^{2}-\langle\nabla F(u(t)),u(t)\rangle_{V',V}\right)   
\nonumber  \\[1ex]
  &  &  \mbox{}-\ep{E(t)}^{\gamma}\langle g(t,u'(t)),u(t)\rangle_{V',V}. 
\label{est:Phi'}
\end{eqnarray}

Let $L_{1}$, $L_{2}$, and $L_{3}$ denote the terms in the three lines of the right-hand side. 
\begin{itemize}

\item  Let us estimate $L_{1}$. From (\ref{est:Phi-interaction}) we deduce that
\begin{equation}
{E(t)}^{\gamma-1}\left|\langle u(t),u'(t)\rangle_{H}\strut\right|\leq K_{3}
\nonumber
\end{equation}
and therefore
\begin{equation}
\frac{1}{2}\leq 1+\gamma\ep E(t)^{\gamma-1}\langle u(t),u'(t)\rangle_{H}\leq\frac{3}{2}
\label{est:L1-T2}
\end{equation}
provided that $\ep$ is small enough. Now we distinguish two cases.
\begin{itemize}

\item  If $\langle g(t,u'(t)),u'(t)\rangle_{V',V}\leq 0$, then from (\ref{est:L1-T2}) we obtain that
\begin{equation}
L_{1}\leq -\frac{3}{2}\langle g(t,u'(t)),u'(t)\rangle_{V',V},
\nonumber
\end{equation}
and hence from (\ref{hp:G2}) we conclude that
\begin{equation}
L_{1}\leq\frac{3C_{3}}{2}-\frac{3\delta_{3}}{2}\|u'(t)\|_{X}^{\alpha+2}.
\nonumber
\end{equation}

\item  If $\langle g(t,u'(t)),u'(t)\rangle_{V',V}\geq 0$, then from (\ref{est:L1-T2}) we obtain that
\begin{equation}
L_{1}\leq -\frac{1}{2}\langle g(t,u'(t)),u'(t)\rangle_{V',V},
\nonumber
\end{equation}
and hence from (\ref{hp:G2}) we conclude that
\begin{equation}
L_{1}\leq\frac{C_{3}}{2}-\frac{\delta_{3}}{2}\|u'(t)\|_{X}^{\alpha+2}.
\nonumber
\end{equation}

\end{itemize}

In both cases it is true that
\begin{equation}
L_{1}\leq\frac{3C_{3}}{2}-\frac{\delta_{3}}{2}\|u'(t)\|_{X}^{\alpha+2}.
\nonumber
\end{equation}

\item Let us estimate $L_{2}$. From (\ref{hp:F3}) and (\ref{defn:Ehat}) it follows that
\begin{equation}
\langle\nabla F(u(t)),u(t)\rangle_{V',V}\geq\delta_{2}F(u(t))-C_{2}=\delta_{2}{E(t)}-K_{4}-K_{5}\|u'(t)\|_{H}^{2},
\nonumber
\end{equation}
and hence, due to the continuous imbedding $X\subseteq H$, we obtain that
\begin{eqnarray}
\frac{1}{\ep}L_{2} & \leq & -\delta_{2}{E(t)}^{\gamma+1}+K_{4}{E(t)}^{\gamma}+K_{6}{E(t)}^{\gamma}\|u'(t)\|_{H}^{2} \nonumber \\
& \leq & -\delta_{2}{E(t)}^{\gamma+1}+K_{4}{E(t)}^{\gamma}+K_{7}{E(t)}^{\gamma}\|u'(t)\|_{X}^{2}
\label{est:L2-main}
\end{eqnarray}

The first condition in the definition (\ref{defn:gamma}) of $\gamma$ implies that
\begin{equation}
\gamma\leq\frac{\alpha(\gamma+1)}{\alpha+2},
\label{est:L2-gamma}
\end{equation}
and therefore, since ${E(t)}\geq 1$, we find that
\begin{equation}
{E(t)}^{\gamma}\leq K_{8}{E(t)}^{\alpha(\gamma+1)/(\alpha+2)},
\label{est:L2-step}
\end{equation}
actually in this case with $K_{8}=1$. Applying this inequality to the last term of (\ref{est:L2-main}) we obtain that
\begin{equation}
\frac{1}{\ep}L_{2}\leq -\delta_{2}{E(t)}^{\gamma+1}+K_{4}{E(t)}^{\gamma}+K_{9}{E(t)}^{\alpha(\gamma+1)/(\alpha+2)}\|u'(t)\|_{X}^{2}.
\label{est:L2-main-bis}
\end{equation}

In order to estimate the second term in (\ref{est:L2-main-bis}) we observe that
\begin{equation}
K_{4}{E(t)}^{\gamma}\leq\frac{\delta_{2}}{4}{E(t)}^{\gamma+1}+K_{10}.
\label{est:L2-1}
\end{equation}

In order to estimate the third term in (\ref{est:L2-main-bis}), we apply Young's inequality and we deduce that
\begin{equation}
K_{9}{E(t)}^{\alpha(\gamma+1)/(\alpha+2)}\cdot\|u'(t)\|_{X}^{2}\leq\frac{\delta_{2}}{2}{E(t)}^{\gamma+1}+K_{11}\|u'(t)\|_{X}^{\alpha+2}.
\label{est:L2-2}
\end{equation}

Plugging (\ref{est:L2-1}) and (\ref{est:L2-2}) into (\ref{est:L2-main-bis}) we conclude that
\begin{equation}
\frac{1}{\ep}L_{2} \leq -\frac{\delta_{2}}{4}{E(t)}^{\gamma+1}+K_{11}\|u'(t)\|_{X}^{\alpha+2}+K_{10},
\nonumber
\end{equation}
which means  that when $\ep>0$ is small enough we obtain that
\begin{equation}
L_{2}\leq-\ep\frac{\delta_{2}}{4}{E(t)}^{\gamma+1}+\frac{\delta_{3}}{4}\|u'(t)\|_{X}^{\alpha+2}+1.
\nonumber
\end{equation} 

\item Let us estimate $L_{3}$. From (\ref{hp:G3}) it follows that
\begin{eqnarray}
\left|\langle g(t,u'(t)),u(t)\rangle_{V',V}\strut\right|  &  \leq  &  \|g(t,u'(t))\|_{X'}\|u(t)\|_{X}  
\nonumber  \\[1ex]
 & \leq & \left(C_{4}+{D_4}\|u'(t)\|_{X}^{\alpha+1}\right)\|u(t)\|_{X}.
 \label{est:L3-start}
\end{eqnarray}

From the continuous imbedding $Y\subseteq X$, assumption~(\ref{hp:F2}), and definition~(\ref{defn:Ehat}), we obtain that
\begin{equation}
\|u(t)\|_{X}\leq K_{12}\|u(t)\|_{Y}\leq K_{13}\left[F(u(t))+C_{1}\right]^{1/(\beta+2)}\leq K_{13}{E(t)}^{1/(\beta+2)}.
\label{est:L3-uX}
\end{equation}

The second condition in (\ref{defn:gamma}) implies that
\begin{equation}
\gamma+\frac{1}{\beta+2}\leq\frac{\gamma+1}{\alpha+2},
\label{est:L3-beta}
\end{equation}
and therefore, since ${E(t)}\geq 1$, from (\ref{est:L3-uX}) we obtain that
\begin{equation}
{E(t)}^{\gamma}\cdot\|u(t)\|_{X}\leq K_{13}{E(t)}^{\gamma+1/(\beta+2)}\leq K_{14}{E(t)}^{(\gamma+1)/(\alpha+2)}.
\label{est:L3-step}
\end{equation}

From this inequality and (\ref{est:L3-start}) we deduce that
\begin{equation}
\frac{1}{\ep}L_{3}\leq K_{15}{E(t)}^{(\gamma+1)/(\alpha+2)}+K_{16}{E(t)}^{(\gamma+1)/(\alpha+2)}\|u'(t)\|_{X}^{\alpha+1}.
\label{est:L4-main}
\end{equation}

In order to estimate the first term in (\ref{est:L4-main}), we observe that $(\gamma+1)/(\alpha+2)<\gamma+1$, and therefore
\begin{equation}
K_{15}{E(t)}^{(\gamma+1)/(\alpha+2)}\leq\frac{\delta_{2}}{16}{E(t)}^{\gamma+1}+K_{17}.
\label{est:L4-1}
\end{equation}

In order to estimate the second term in (\ref{est:L4-main}), we apply Young's inequality and we obtain that
\begin{equation}
K_{16}{E(t)}^{(\gamma+1)/(\alpha+2)}\|u'(t)\|_{X}^{\alpha+1}\leq\frac{\delta_{2}}{16}{E(t)}^{\gamma+1}+K_{18}\|u'(t)\|_{X}^{\alpha+2}.
\label{est:L4-2}
\end{equation}

Plugging (\ref{est:L4-1}) and (\ref{est:L4-2}) into (\ref{est:L4-main}) we conclude that
\begin{equation}
\frac{1}{\ep}L_{3}\leq\frac{\delta_{2}}{8}{E(t)}^{\gamma+1}+K_{18}\|u'(t)\|_{X}^{\alpha+2}+K_{17},
\nonumber
\end{equation}
which means  that when $\ep>0$ is small enough we obtain that
\begin{equation}
L_{3}\leq\ep\frac{\delta_{2}}{8}{E(t)}^{\gamma+1}+\frac{\delta_{3}}{4}\|u'(t)\|_{X}^{\alpha+2}+1.
\nonumber
\end{equation}

\end{itemize}

From the estimates for $L_{1}$, $L_{2}$, and $L_{3}$ we conclude that
\begin{equation}
\Phi'(t)\leq-\ep\frac{\delta_{2}}{8}{E(t)}^{\gamma+1}+\frac{3C_{3}}{2}+2
\nonumber
\end{equation}
whenever $\ep>0$ is small enough, and this implies (\ref{th:est-Phi'}) because of (\ref{th:Ehat-Phi}).
\end{proof}


\subsection{Universal decay at infinity} 

In this section we prove two universal decay properties for solutions to (\ref{eqn:abstract}). In the first result we strengthen the assumptions by requiring that (F2)--(F3) and (G2)--(G3) hold true with all $C_{i}$'s equal to zero. We obtain that the energy of solutions is bounded from above by a multiple of a negative power of $t$, independent (both the power and the constant) of the solution itself.

At a first glance, the conclusion (\ref{th:ud1}) resembles (\ref{th:ubp}) with $\Gamma_{\!*}=0$. Nevertheless, we stress that the value of $\gamma$ is now different (it is the maximum instead of the minimum between the same two quantities), and that now the conclusion is true for every $t\geq 1$ for solutions that are defined on the whole half-line $(0,+\infty)$.

\begin{thm}[Universal decay under standard assumptions]\label{thm:ud1}

Let us consider the chain of functional spaces (\ref{7-spaces}), and let $F$ and $g$ be two functions satisfying assumptions (F1)--(F2)--(F3) and (G1)--(G2)--(G3) of section~\ref{sec:functional} with $C_{1}=C_{2}=C_{3}=C_{4}=0$ and $T=+\infty$.  Let $u:(0,+\infty)\to V$ be a strong solution to (\ref{eqn:abstract}) according to Definition~\ref{defn:strongsol}.

Let us assume in addition that $0<\alpha<\beta$, and let us set
\begin{equation}
\gamma:=\max\left\{\frac{\alpha}{2},\frac{\beta -\alpha}{(\alpha+1)(\beta + 2)}\right\}.
\label{defn:gamma-as}
\end{equation}

Then there exists a real number ${D}$ such that
\begin{equation}
\|u'(t)\|_{H}^{2}+F(u(t))\leq{D}\,t^{-1/\gamma}
\qquad
\forall t\geq 1.
\label{th:ud1}
\end{equation}

The constant ${D}$ depends on the immersions (\ref{7-spaces}), and on the constants that appear in (\ref{hp:F2}) through (\ref{hp:G3}), but it is independent of $u$.

\end{thm}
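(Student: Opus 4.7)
The strategy is to rerun the argument of Theorem~\ref{thm:ubp}, exploiting the fact that with $C_1=C_2=C_3=C_4=0$ the energy $E_0(t)$ is nonnegative and nonincreasing, and therefore Theorem~\ref{thm:ubp} already furnishes an \emph{a priori} upper bound. More precisely, Theorem~\ref{thm:ubp} applied to our solution yields a universal constant $M$ (depending only on the embeddings and on the constants in (\ref{hp:F2})--(\ref{hp:G3})) such that $E_0(t)\le M$ for every $t\ge 1/2$. This upper bound plays here the role that the artificial ``$+1$'' shift played in the proof of Theorem~\ref{thm:ubp}: it allows one to compare different powers of $E_0$ via $E_0^a\le M^{a-b}E_0^b$ whenever $a\ge b$.

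With this bound in hand, I would set $E(t):=E_0(t)$ (no additive shift) and reintroduce the modified energy $\Phi(t):=E(t)+\varepsilon E(t)^{\gamma}\langle u(t),u'(t)\rangle_H$ with $\gamma$ now given by (\ref{defn:gamma-as}). The computation of $\Phi'$ splits, as in (\ref{est:Phi'}), into three contributions $L_1,L_2,L_3$. The estimates for $L_1$, $L_2$, $L_3$ are structurally the same as in the proof of Theorem~\ref{thm:ubp}, but the Young-inequality balances (\ref{est:L2-gamma}) and (\ref{est:L3-beta}) now have to be read in the \emph{reverse} direction: instead of $\gamma\le\alpha(\gamma+1)/(\alpha+2)$ and $\gamma+1/(\beta+2)\le(\gamma+1)/(\alpha+2)$, we need the opposite inequalities $\gamma\ge\alpha/2$ and $\gamma\ge(\beta-\alpha)/((\alpha+1)(\beta+2))$, the excess powers of $E$ being absorbed by $M^{\cdots}$. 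The smallest admissible $\gamma$ is therefore exactly the maximum in (\ref{defn:gamma-as}). Since all the $C_i$'s (and hence all the nuisance constants $K_{10}$, $K_{17}$, and the $+1$'s arising from them in the proof of Theorem~\ref{thm:ubp}) now vanish, the resulting differential inequality takes the clean Bernoulli form
\begin{equation*}
\Phi'(t)\le -\rho\,\Phi(t)^{1+\gamma},
\end{equation*}
for some $\rho>0$ depending only on the data.

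Once this inequality is established, the conclusion follows from the elementary observation that dividing by $\Phi^{1+\gamma}$ and integrating yields $\Phi(t)^{-\gamma}\ge \gamma\rho\,(t-s)$ for any $s$ at which $\Phi$ is finite, hence $\Phi(t)\le(\gamma\rho(t-s))^{-1/\gamma}$; this is the version of Lemma~\ref{lemma:ubp} with $M=0$. Pulling the bound back to $E_0$ through the equivalence $\Phi\approx E_0$ gives the claimed $E_0(t)\le D\,t^{-1/\gamma}$ for $t\ge 1$.

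\paragraph{Main obstacle.} The real subtlety is the equivalence $\Phi\sim E_0$. In the proof of Theorem~\ref{thm:ubp}, the lower bound $E\ge 1$ made the perturbation $\varepsilon E^{\gamma}\langle u,u'\rangle_H$ automatically small with respect to $E$; here we only have $E_0\le M$, and since $\gamma+\tfrac12+\tfrac1{\beta+2}\le 1$ in general (cf.\ (\ref{ineq:gamma-beta})), the perturbation need \emph{not} be dominated by $E_0/2$ once $E_0$ becomes very small. I expect this to be handled by a bootstrap: one first establishes the decay in the regime where $E_0(t)$ is still comparable to the a priori bound $M$, and then exploits the monotonicity of $E_0$ (together with a shift-invariant reapplication of the same argument on intervals $(s,+\infty)$ for $s\to+\infty$) to propagate the estimate to all $t\ge 1$. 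The careful bookkeeping of the constants along the shifts, and the verification that they remain independent of the particular solution $u$, is the step that I expect to demand the most attention.
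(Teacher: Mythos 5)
Your overall route is exactly the paper's: invoke Theorem~\ref{thm:ubp} to get a universal bound $E_0(t)\le M$ for $t\ge 1$, form $\Phi=E_0+\varepsilon E_0^{\gamma}\langle u,u'\rangle_H$ with $\gamma$ as in (\ref{defn:gamma-as}), rerun the three estimates $L_1,L_2,L_3$ with the Young balances (\ref{est:L2-gamma}) and (\ref{est:L3-beta}) reversed and the excess powers of $E_0$ absorbed by the a priori bound, and integrate the clean inequality $\Phi'\le-\rho\,\Phi^{1+\gamma}$. However, the step you single out as the ``main obstacle'' rests on a misreading, and the workaround you sketch is both unnecessary and not a proof. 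The inequality $\gamma+\frac{1}{2}+\frac{1}{\beta+2}\le 1$ of (\ref{ineq:gamma-beta}) is a property of the \emph{minimum} in (\ref{defn:gamma}); for the \emph{maximum} in (\ref{defn:gamma-as}) the opposite inequality $\gamma+\frac{1}{2}+\frac{1}{\beta+2}\ge 1$ holds (this is (\ref{ineq:gamma-beta-as}) in the paper, a consequence of $\gamma\ge\beta/(2\beta+4)$). Hence $E_0^{\gamma}|\langle u,u'\rangle_H|\le K\,E_0^{\gamma+1/2+1/(\beta+2)}\le K\,M^{\gamma+1/2+1/(\beta+2)-1}E_0$ for all $t\ge 1$, using precisely the upper bound $E_0\le M$; the equivalence $\frac{1}{2}E_0\le\Phi\le\frac{3}{2}E_0$ therefore holds uniformly for $\varepsilon$ small, including where $E_0$ is very small, and no bootstrap is needed. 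This is the one place where the a priori bound enters the equivalence, and it is the same mechanism you already use to absorb the excess powers in $L_2$ and $L_3$.

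Moreover, the bootstrap as you describe it would not close the argument: reapplying Theorem~\ref{thm:ubp} on shifted half-lines $(s,+\infty)$ only yields $E_0(t)\le\Gamma(t-s)^{-1/\gamma_{\min}}+\Gamma_{\!*}$, i.e.\ boundedness with the same additive constant, and provides no mechanism forcing $E_0$ to decay, so ``propagating the estimate'' along shifts is not justified as stated. Finally, there is a genuine technical point you do not address and the paper does: when $\gamma<1$ the map $s\mapsto s^{\gamma}$ is not differentiable at $s=0$, so both the computation of $\Phi'$ and your division by $\Phi^{1+\gamma}$ require $E_0>0$. Since with $C_3=0$ the energy $E_0$ is nonincreasing by (\ref{eqn:E'}) and (\ref{hp:G2}), either $E_0$ vanishes identically from some time on (where the conclusion is trivial) or $E_0>0$ on the relevant interval; treating these two cases separately completes the proof.
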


\begin{proof}

We consider the usual energy $E_{0}(t)$ defined in (\ref{defn:E}). From Theorem~\ref{thm:ubp} we know that the universal bound
\begin{equation}
E_{0}(t)\leq K_{1}
\qquad
\forall t\geq 1
\label{th:E(1)}
\end{equation}
holds true with a constant independent of the solution. Then we introduce the modified energy
\begin{equation}
\Phi(t):=E_{0}(t)+\ep E_{0}(t)^{\gamma}\langle u(t),u'(t)\rangle_{H},
\label{defn:E-ud1}
\end{equation}
where $\gamma$ is defined by (\ref{defn:gamma-as}), and $\ep>0$ is a small parameter. Since $\gamma$ might be less than~1, in order to avoid differentiability issues we assume for the time being that $E_{0}(t)>0$ for every $t>0$. At the end of the proof we discuss the case where $E_{0}(t)=0$ for some $t>0$.

We claim that, for every $\ep>0$ small enough, the modified energy $\Phi$ has the following two properties.
\begin{itemize}

\item  It is a small perturbation of $E_{0}$ in the sense that
\begin{equation}
\frac{1}{2}E_{0}(t)\leq\Phi(t)\leq\frac{3}{2}E_{0}(t).
\label{th:E-Phi}
\end{equation}

\item  It satisfies the differential inequality 
\begin{equation}
\Phi'(t)\leq-\ep\frac{\delta_{2}}{4}\left(\frac{2}{3}\right)^{\gamma+1} \Phi(t)^{\gamma+1}
\label{th:est-Phi'-as}
\end{equation}
for almost every $t\geq 1$, where $\gamma$ is defined by (\ref{defn:gamma-as}), and $\delta_{2}$ is the constant that appears in (\ref{hp:F3}).
\end{itemize}

As usual, the smallness of $\ep$ depends only on the norms of the continuous imbeddings (\ref{7-spaces}), and on all the constants appearing in (\ref{hp:F2}) through (\ref{hp:G3}), but it does not depend on $u$. All the constants $K_{2}$, \ldots, $K_{10}$ that we introduce in the sequel of the proof have the same property.

Let us assume that the two claims above have been proved. From the universal bound~(\ref{th:E(1)}) we know that $\Phi(1)\leq 3K_{1}/2$. At this point it is enough to select an admissible value of $\ep>0$, and integrating the differential inequality (\ref{th:est-Phi'-as}) we deduce that
\begin{equation}
\Phi(t)\leq K_{2}\,t^{-1/\gamma}
\qquad
\forall t\geq 1,
\nonumber
\end{equation}
which implies (\ref{th:ud1}) because of (\ref{th:E-Phi}).

\subparagraph{\textmd{\textit{Equivalence of the energies}}}

Arguing as in the corresponding paragraph of the proof of Theorem~\ref{thm:ubp} we find that now
\begin{equation}
\gamma+\frac{1}{\beta+2}+\frac{1}{2}\geq 1,
\label{ineq:gamma-beta-as}
\end{equation}
and again
\begin{equation}
E_{0}(t)^{\gamma}\,|\langle u(t),u'(t)\rangle_{H}|\leq K_{3}\left[E_{0}(t)\right]^{\gamma+(1/2)+1/(\beta+2)}
\nonumber
\end{equation}

Recalling (\ref{ineq:gamma-beta-as}) and the universal bound (\ref{th:E(1)}) we conclude that
\begin{equation}
E_{0}(t)^{\gamma}\,|\langle u(t),u'(t)\rangle_{H}|\leq K_{4}E_{0}(t).
\label{est:Phi-interaction-as}
\end{equation}
and hence (\ref{th:E-Phi}) holds true whenever $K_{4}\ep\leq 1/2$.

\subparagraph{\textmd{\textit{Differential inequality for the modified energy}}}

We show that $\Phi$ satisfies (\ref{th:est-Phi'-as}) when $\ep>0$ is small enough. The time-derivative of $\Phi$ is given by (\ref{est:Phi'}), now with $E_{0}(t)$ instead of ${E(t)}$. Let $L_{1}$, $L_{2}$, and $L_{3}$ denote the terms in the three lines of the right-hand side. 
\begin{itemize}

\item  Let us estimate $L_{1}$. From (\ref{est:Phi-interaction-as}) we deduce that
\begin{equation}
1+\gamma\ep E_{0}(t)^{\gamma-1} \langle u(t),u'(t)\rangle_{H}\geq\frac{1}{2}
\nonumber
\end{equation}
provided that $\ep>0$ is small enough. Since now (\ref{hp:G2}) holds true with $C_{3}=0$, we conclude that
\begin{equation}
L_{1}\leq-\frac{\delta_{3}}{2}\|u'(t)\|_{X}^{\alpha+2}.
\nonumber
\end{equation}

\item Let us estimate $L_{2}$. Arguing as in the corresponding paragraph of the proof of Theorem~\ref{thm:ubp}, and recalling that now $C_{2}=0$, we find that
\begin{equation}
\frac{1}{\ep}L_{2}\leq -\delta_{2}E_{0}(t)^{\gamma+1}+K_{5}E_{0}(t)^{\gamma}\|u'(t)\|_{X}^{2}.
\nonumber
\end{equation}

The new definition (\ref{defn:gamma-as}) of $\gamma$ implies that (\ref{est:L2-gamma}) holds true with the opposite sign, but on the other hand now we know that $E_{0}(t)$ is bounded from above for $t\geq 1$, and therefore again the equivalent of inequality (\ref{est:L2-step}) holds true, and therefore
\begin{equation}
\frac{1}{\ep}L_{2}\leq -\delta_{2}E_{0}(t)^{\gamma+1}+K_{6}E_{0}(t)^{\alpha(\gamma+1)/(\alpha+2)}\|u'(t)\|_{X}^{2}.
\nonumber
\end{equation}

We estimate the last term by exploiting Young's inequality as we did in (\ref{est:L2-2}), and  we obtain that
\begin{equation}
\frac{1}{\ep}L_{2} \leq -\frac{\delta_{2}}{2}E_{0}(t)^{\gamma+1}+K_{7}\|u'(t)\|_{X}^{\alpha+2},
\nonumber
\end{equation}
which means  that when $\ep>0$ is small enough we conclude that
\begin{equation}
L_{2}\leq-\ep\frac{\delta_{2}}{2}E_{0}(t)^{\gamma+1}+\frac{\delta_{3}}{4}\|u'(t)\|_{X}^{\alpha+2}.
\nonumber
\end{equation} 

\item Let us estimate $L_{3}$. Arguing as in the corresponding paragraph of the proof of Theorem~\ref{thm:ubp}, and recalling that now $C_{4}=0$, we find that
\begin{equation}
\frac{1}{\ep}L_{3}\leq E_{0}(t)^{\gamma}\cdot{D_4}\|u'(t)\|_{X}^{\alpha+1}\cdot\|u(t)\|_{X}\leq K_{8}E_{0}(t)^{\gamma+1/(\beta+2)}\|u'(t)\|_{X}^{\alpha+1}.
\nonumber
\end{equation}

The new definition (\ref{defn:gamma-as}) of $\gamma$ implies that (\ref{est:L3-beta}) holds true with the opposite sign, but on the other hand now we know that $E_{0}(t)$ is bounded from above for $t\geq 1$, and therefore again inequality (\ref{est:L3-step}) holds true. It follows that 
\begin{equation}
\frac{1}{\ep}L_{3}\leq K_{9}E_{0}(t)^{(\gamma+1)/(\alpha+2)}\|u'(t)\|_{X}^{\alpha+1}.
\nonumber
\end{equation}

We estimate the right-hand side by exploiting Young's inequality, and we find that
\begin{equation}
\frac{1}{\ep}L_{3}\leq\frac{\delta_{2}}{4}E_{0}(t)^{\gamma+1}+K_{10}\|u'(t)\|_{X}^{\alpha+2},
\nonumber
\end{equation}
which means  that when $\ep>0$ is small enough we obtain that
\begin{equation}
L_{3}\leq\ep\frac{\delta_{2}}{4}E_{0}(t)^{\gamma+1}+\frac{\delta_{3}}{4}\|u'(t)\|_{X}^{\alpha+2}.
\nonumber
\end{equation}

\end{itemize}

Plugging the estimates for $L_{1}$, $L_{2}$, $L_{3}$ into the expression of $\Phi'(t)$, we conclude that
\begin{equation}
\Phi'(t)\leq-\frac{\delta_{2}}{4}\ep E_{0}(t)^{\gamma+1}
\nonumber
\end{equation}
for almost every $t\geq 1$, and this implies (\ref{th:est-Phi'-as}) because of (\ref{th:E-Phi}).

\subparagraph{\textmd{\textit{When the energy vanishes for some positive time}}}

It remains to consider the case where $E_{0}(t)$ vanishes for some positive time. To begin with, from (\ref{eqn:E'}) and assumption (\ref{hp:G2}) with $C_{3}=0$ we deduce that $E_{0}(t)$ is nonincreasing. It follows that there are only two cases. If $E_{0}(t)=0$ for every $t\geq 1$, then the conclusion is trivial. Otherwise, there exists $T>1$ such that $E_{0}(t)>0$ in $[1,T)$, and $E_{0}(t)=0$ for every $t\geq T$. In this case the conclusion is trivial for $t\geq T$, while in the interval $[1,T)$ both (\ref{th:E-Phi}) and (\ref{th:est-Phi'-as}) hold true, leading to (\ref{th:ud1}) also in this interval.
\end{proof}


In the last result we assume in addition that the chain of spaces (\ref{7-spaces}) satisfies (\ref{hp:norms}), and that the function $F$ satisfies also~(F4). We obtain that the universal decay holds true with a better exponent, equal to the first term in the maximum~(\ref{defn:gamma-as}).

\begin{thm}[Universal decay under stronger assumptions]\label{thm:ud2}

Let us consider the chain of functional spaces (\ref{7-spaces}), and let us assume that (\ref{hp:norms}) holds true. Let $F$ and $g$ be two functions satisfying assumptions (F1)--(F2)--(F3)--(F4) and (G1)--(G2)--(G3) of section~\ref{sec:functional} with $C_{1}=C_{2}=C_{3}=C_{4}=0$ and $T=+\infty$.  Let $u:(0,+\infty)\to V$ be a strong solution to (\ref{eqn:abstract}) according to Definition~\ref{defn:strongsol}.

Let us assume in addition that $0<\alpha<\beta$.

Then there exists a real number ${D}$ such that
\begin{equation}
\|u'(t)\|_{H}^{2}+F(u(t))\leq{D}\,t^{-2/\alpha}
\qquad
\forall t\geq 1.
\nonumber
\end{equation}

The constant ${D}$ depends on the immersions (\ref{7-spaces}), and on the constants that appear in all the assumptions, but it is independent of $u$.

\end{thm}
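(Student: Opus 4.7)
The strategy is to repeat the modified-energy argument of Theorem~\ref{thm:ud1}, but now fixing $\gamma:=\alpha/2$ regardless of which term dominates in \eqref{defn:gamma-as}. The extra assumptions \eqref{hp:norms} and (F4), together with the universal bound $E_{0}(t)\leq K_{1}$ for $t\geq 1$ already provided by Theorem~\ref{thm:ubp}, are precisely what is needed to patch the estimates that would otherwise break when $\alpha/2$ is the smaller of the two quantities in \eqref{defn:gamma-as}.

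First I would exploit \eqref{hp:norms} combined with (F2) and (F4) (all with $C_{i}=0$) to obtain the improved interpolation bound
\begin{equation*}
\|u(t)\|_{X}^{\alpha+2}\leq C_{5}\left(\|u(t)\|_{H}^{2}+\|u(t)\|_{Y}^{\beta+2}\right)\leq K\,F(u(t))\leq K\,E_{0}(t),
\end{equation*}
which replaces the estimate $\|u\|_{X}\leq K\,E_{0}^{1/(\beta+2)}$ used in Theorem~\ref{thm:ud1}. I then introduce $\Phi(t):=E_{0}(t)+\ep\,E_{0}(t)^{\alpha/2}\langle u(t),u'(t)\rangle_{H}$. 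The equivalence $\tfrac{1}{2}E_{0}\leq\Phi\leq\tfrac{3}{2}E_{0}$ for $\ep$ small follows verbatim from the proof of Theorem~\ref{thm:ud1}, using $E_{0}$ bounded and the fact that $\alpha/2+\tfrac{1}{2}+\tfrac{1}{\beta+2}$ is harmless once $E_{0}$ is bounded from above.

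Next I would compute $\Phi'=L_{1}+L_{2}+L_{3}$ as in \eqref{est:Phi'}, with $E_{0}$ in place of $E$, and aim at a differential inequality of the form $\Phi'\leq -c\,\Phi^{1+\alpha/2}$. The estimates of $L_{1}$ and $L_{2}$ go through exactly as in Theorem~\ref{thm:ud1}: with $\gamma=\alpha/2$, condition \eqref{est:L2-gamma} becomes an equality, and Young's inequality with conjugate exponents $(\alpha+2)/\alpha$ and $(\alpha+2)/2$ yields $E_{0}^{\alpha/2}\|u'\|_{X}^{2}\leq\lambda E_{0}^{(\alpha+2)/2}+C_{\lambda}\|u'\|_{X}^{\alpha+2}$. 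The main obstacle, as expected, is $L_{3}$. Using (G3) with $C_{4}=0$ and the improved bound above,
\begin{equation*}
|L_{3}|\leq\ep\,D_{4}\,K\,E_{0}(t)^{\alpha/2+1/(\alpha+2)}\|u'(t)\|_{X}^{\alpha+1},
\end{equation*}
and the exponents in the two factors fail the Young balance needed to produce $\lambda E_{0}^{(\alpha+2)/2}+C\|u'\|_{X}^{\alpha+2}$.

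To break this obstruction I would use the universal bound from Theorem~\ref{thm:ubp}. A direct computation shows that $\alpha/2+1/(\alpha+2)-1/2=\alpha(\alpha+1)/(2(\alpha+2))\geq 0$, so that the bound $E_{0}\leq K_{1}$ for $t\geq 1$ gives
\begin{equation*}
E_{0}(t)^{\alpha/2+1/(\alpha+2)}\leq K_{1}^{\alpha(\alpha+1)/(2(\alpha+2))}\,E_{0}(t)^{1/2}.
\end{equation*}
Plugging this into the previous display reduces $|L_{3}|$ to a constant times $\ep\,E_{0}^{1/2}\|u'\|_{X}^{\alpha+1}$, and now the conjugate exponents $\alpha+2$ and $(\alpha+2)/(\alpha+1)$ do satisfy Young's inequality exactly, producing $\lambda E_{0}^{(\alpha+2)/2}+C_{\lambda}\|u'\|_{X}^{\alpha+2}$. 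Fixing $\lambda$ small and then $\ep$ small, the $E_{0}^{(\alpha+2)/2}$ contributions are absorbed into the leading negative term from $L_{2}$, while the $\|u'\|_{X}^{\alpha+2}$ contributions are absorbed into the negative term from $L_{1}$. This gives $\Phi'(t)\leq -c\,\Phi(t)^{1+\alpha/2}$ for a.e.\ $t\geq 1$, and integrating (or applying Lemma~\ref{lemma:ubp} with $M=0$) yields $\Phi(t)\leq D\,t^{-2/\alpha}$, whence $E_{0}(t)\leq 2D\,t^{-2/\alpha}$. The case where $E_{0}$ vanishes at some $T>1$ is handled exactly as at the end of the proof of Theorem~\ref{thm:ud1}: $E_{0}$ is nonincreasing by \eqref{eqn:E'} and (G2) with $C_{3}=0$, so once it reaches zero it stays zero and the estimate is trivial beyond $T$. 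The hard part is thus squeezing $L_{3}$ through Young's inequality after using the uniform bound; once that is done, the rest is a routine repetition of the Theorem~\ref{thm:ud1} template.
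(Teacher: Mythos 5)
Your overall plan is the paper's own: fix $\gamma=\alpha/2$, use the universal bound $E_0(t)\le K_1$ for $t\ge1$ from Theorem~\ref{thm:ubp}, derive $\|u(t)\|_X^{\alpha+2}\le K\,E_0(t)$ from (\ref{hp:norms}), (\ref{hp:F2}) and (\ref{hp:F4}), and use it on $L_3$; your reduction of the exponent $\alpha/2+1/(\alpha+2)$ to $1/2$ via the a priori bound is literally the paper's step, since $(\gamma+1)/(\alpha+2)=1/2$ when $\gamma=\alpha/2$, and the subsequent Young absorptions and the treatment of $L_1$, $L_2$ and of the case $E_0(T)=0$ are the same. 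However, there is one step whose justification is wrong exactly in the regime where the theorem says something new, namely the equivalence $\tfrac12 E_0\le\Phi\le\tfrac32 E_0$. You claim it ``follows verbatim from Theorem~\ref{thm:ud1}'' because the exponent $\alpha/2+\tfrac12+\tfrac1{\beta+2}$ is ``harmless once $E_0$ is bounded from above''. The upper bound $E_0\le K_1$ only lets you \emph{lower} exponents; the estimate $E_0^{\theta}\le C\,E_0$ needs $\theta\ge1$, and
\begin{equation*}
\frac{\alpha}{2}+\frac12+\frac1{\beta+2}<1
\quad\Longleftrightarrow\quad
\alpha<\frac{\beta}{\beta+2},
\end{equation*}
which is precisely the case in which $\alpha/2$ is the smaller of the two quantities in (\ref{defn:gamma-as}), i.e.\ the only case where Theorem~\ref{thm:ud2} improves on Theorem~\ref{thm:ud1}. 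Since the whole point is that $E_0(t)$ may tend to $0$, a power $E_0^{\theta}$ with $\theta<1$ is \emph{not} controlled by $E_0$ near $0$, so as written the equivalence fails; the same control $E_0^{\gamma-1}|\langle u,u'\rangle_H|\le K$ is what you need for the sign argument in $L_1$, so the gap propagates there as well.

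The repair is exactly the reason (F4) is among the hypotheses, and it is how the paper argues: by (\ref{hp:F4}),
\begin{equation*}
|\langle u(t),u'(t)\rangle_H|\le\frac12\|u'(t)\|_H^2+\frac12\|u(t)\|_H^2\le K\,E_0(t),
\end{equation*}
and then $E_0(t)^{\alpha/2}|\langle u(t),u'(t)\rangle_H|\le K\,K_1^{\alpha/2}E_0(t)$ by the universal bound, which gives both the equivalence of the energies and the bound needed in $L_1$ for $\ep$ small. Alternatively, your own interpolation inequality together with $X\subseteq H$ gives $\|u(t)\|_H\le K\,E_0(t)^{1/(\alpha+2)}$, and since $\alpha/2+\tfrac12+\tfrac1{\alpha+2}\ge1$ this also closes the gap with tools you already introduced. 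With that single correction your argument coincides with the paper's proof.
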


\begin{proof}

We consider the usual energy $E_{0}(t)$ defined in~(\ref{defn:E}), and the modified energy $\Phi(t)$ defined in~(\ref{defn:E-ud1}), where now $\gamma:=\alpha/2$, and $\ep>0$ is again a small parameter. We assume, without loss of generality, that $E_{0}(t)>0$ for every positive time, because otherwise we can argue as in the last paragraph of the previous proof.  Again we claim that, for every $\ep>0$ small enough, the modified energy $\Phi$ is a small perturbation of $E_{0}$ in the sense of (\ref{th:E-Phi}), and it satisfies the differential inequality (\ref{th:est-Phi'-as}) for almost every $t\geq 1$. All the constants and the smallness of $\ep$ depend as usual on the constants appearing in the assumptions, but they do not depend on $u$. 

Again we know from Theorem~\ref{thm:ubp} that the universal bound (\ref{th:E(1)})
holds true, and therefore the conclusion follows again from the differential inequality and the equivalence of the energies.

\subparagraph{\textmd{\textit{Equivalence of the energies}}}

From assumption (\ref{hp:F4}) we obtain that
\begin{equation}
|\langle u'(t),u(t)\rangle_{H}|\leq\frac{1}{2}\|u'(t)\|_{H}^{2}+\frac{1}{2}\|u(t)\|_{H}^{2}\leq K_{1}E_{0}(u(t)),
\nonumber
\end{equation}
and hence, since $E_{0}(t)$ is bounded from above for $t\geq 1$, we conclude that
\begin{equation}
\ep E_{0}(t)^{\gamma}|\langle u(t),u'(t)\rangle_{H}|\leq\ep K_{1}E_{0}(t)^{\gamma}\cdot E_{0}(t)\leq\ep K_{2}E_{0}(t).
\nonumber
\end{equation}

This proves that (\ref{th:E-Phi}) holds true whenever $K_{2}\ep\leq 1/2$.

\subparagraph{\textmd{\textit{Differential inequality for the modified energy}}}

We show that $\Phi$ satisfies (\ref{th:est-Phi'-as}) when $\ep>0$ is small enough. The time-derivative of $\Phi$ is again given by (\ref{est:Phi'}) with $E_{0}$ instead of $E$. Let $L_{1}$, $L_{2}$, and $L_{3}$ denote the terms in the three lines of the right-hand side. 

We can estimate $L_{1}$ and $L_{2}$ by arguing as in the corresponding parts of proof of Theorem~\ref{thm:ud1}, because in those parts we exploited only that $\gamma\geq\alpha/2$. When $\ep>0$ is small enough we obtain that
\begin{equation}
L_{1}\leq-\frac{\delta_{3}}{2}\|u'(t)\|_{X}^{\alpha+2}
\qquad\mbox{and}\qquad
L_{2}\leq-\ep\frac{\delta_{2}}{2}{E_{0}(t)}^{\gamma+1}+\frac{\delta_{3}}{4}\|u'(t)\|_{X}^{\alpha+2}.
\nonumber
\end{equation}

In order to estimate $L_{3}$, from (\ref{hp:G3}) with $C_{4}=0$ we deduce that
\begin{equation}
\frac{1}{\ep}L_{3}\leq{D_4}E_{0}(t)^{\gamma}\cdot\|u'(t)\|_{X}^{\alpha+1}\cdot\|u(t)\|_{X}.
\nonumber
\end{equation}

From (\ref{hp:norms}), (\ref{hp:F2}) and (\ref{hp:F4}) we obtain that
\begin{equation}
\|u\|_{X}^{\alpha+2}\leq{C_5}\left(\|u\|_{H}^{2}+\|u\|_{Y}^{\beta+2}\right)\leq K_{3}F(u(t))\leq K_{3}E_{0}(t),
\nonumber
\end{equation}
and therefore
\begin{equation}
\frac{1}{\ep}L_{3}\leq K_{4}E_{0}(t)^{\gamma+1/(\alpha+2)}\cdot\|u'(t)\|_{X}^{\alpha+1}.
\nonumber
\end{equation}

Now we observe that
\begin{equation}
\gamma+\frac{1}{\alpha+2}\geq\frac{\gamma+1}{\alpha+2},
\nonumber
\end{equation}
and thus from the universal bound (\ref{th:E(1)}) we deduce that
\begin{equation}
\frac{1}{\ep}L_{3}\leq K_{5}E_{0}(t)^{(\gamma+1)/(\alpha+2)}\cdot\|u'(t)\|_{X}^{\alpha+1}.
\nonumber
\end{equation}

Finally, from Young's inequality we conclude that
\begin{equation}
\frac{1}{\ep}L_{3}\leq\frac{\delta_{2}}{4}{E_{0}(t)}^{\gamma+1}+K_{6}\|u'(t)\|_{X}^{\alpha+2},
\nonumber
\end{equation}
which means  that when $\ep>0$ is small enough we obtain that
\begin{equation}
L_{3}\leq\ep\frac{\delta_{2}}{4}{E_{0}(t)}^{\gamma+1}+\frac{\delta_{3}}{4}\|u'(t)\|_{X}^{\alpha+2}.
\nonumber
\end{equation}

At this point the conclusion follows as in the proof of Theorem~\ref{thm:ud1}.
\end{proof}


\section{Universal bound/decay for PDEs}\label{sec:pde}

In this section we apply the abstract result of section~\ref{sec:abstract} to some hyperbolic partial differential equations. Throughout this section, we assume that $N$ is a positive integer, $\Omega\subseteq\re^{N}$ is a bounded open set with smooth boundary (regular enough to have classical Sobolev imbeddings and $H^{2}$ regularity for the Dirichlet or Neumann Laplacian up to the boundary), $T$ is a positive real number, and $h\in L^{\infty}((0,T),L^{2}(\Omega))$ is a  function that plays the role of a forcing term in the equations.

\subsection{Semilinear wave equations}

\paragraph{\textmd{\textit{Statement of the problem and well-posedness}}}

Let us consider, in a cylinder $(0,T)\times\Omega$ or $(0,+\infty)\times\Omega$, semilinear wave equations of the form
\begin{equation}
u_{tt}-\Delta u+b|u|^{\beta}u-\lambda u+c|u_{t}|^{\alpha}u_{t}-\mu u_{t}=h,
\label{eqn:wave}
\end{equation}
where $\alpha$, $\beta$, $b$, $c$ are positive real parameters, and $\lambda$, $\mu$ are real parameters. Let us add initial conditions
\begin{equation}
u(0,x)=u_{0}(x),
\qquad
u_{t}(0,x)=u_{1}(x),
\label{initial-data}
\end{equation}
and either homogeneous Dirichlet boundary conditions
\begin{equation}
u(t,x)=0
\qquad
\mbox{in }(0,T)\times\partial\Omega,
\nonumber
\end{equation}
or homogeneous Neumann boundary conditions
\begin{equation}
\frac{\partial u}{\partial n}(t,x)=0
\qquad
\mbox{in }(0,T)\times\partial\Omega.
\nonumber
\end{equation}

If we assume that 
\begin{equation}
(N-2)\beta\leq 2,
\label{hp:beta-wave}
\end{equation}
then $H^{1}(\Omega)$ is continuously imbedded into $L^{2\beta+2}(\Omega)$. In this case the initial-boundary-value problem is classically well-posed for initial data $(u_{0},u_{1})\in H^{1}_{0}(\Omega)\times L^{2}(\Omega)$ in the case of Dirichlet boundary conditions, and for initial data $(u_{0},u_{1})\in H^{1}(\Omega)\times L^{2}(\Omega)$ in the case of Neumann boundary conditions. Here ``well-posedness'' refers to weak solutions, while strong solutions exist under additional regularity assumptions on the initial data and on the forcing term $h$. We refer to Proposition~II.2.2.1 and Theorem~II.3.2.1 in~\cite{HDieudo} for the construction of weak and strong solutions.

\paragraph{\textmd{\textit{The abstract framework}}}

The problem fits in the abstract framework of section~\ref{sec:functional} if we set
\begin{equation}
F(u):=\frac{1}{2}\|\nabla u\|_{L^{2}(\Omega)}^{2}-\frac{\lambda}{2}\|u\|_{L^{2}(\Omega)}^{2}+\frac{b}{\beta+2}\|u\|_{L^{\beta+2}(\Omega)}^{\beta+2},
\nonumber
\end{equation}
and
\begin{equation}
[g(t,v)](t,x):=c|v(x)|^{\alpha}v(x)-\mu v(x)-h(t,x),
\label{defn:gtv-wave}
\end{equation}
and we choose the functional spaces
\begin{equation}
H:=L^{2}(\Omega),
\qquad
X:=L^{\alpha+2}(\Omega),
\qquad
Y:=L^{\beta+2}(\Omega),
\label{defn:HXY-wave}
\end{equation}
with $V:=H^{1}_{0}(\Omega)$ in the case of Dirichlet boundary conditions, and $V:=H^{1}(\Omega)$ in the case of Neumann boundary conditions. The verification of (F1) and (G1) is quite straightforward with this choice of the functional spaces. Let us check the remaining abstract assumptions of section~\ref{sec:functional}.

\begin{itemize}

\item  Assumption (F2). Inequality (\ref{hp:F2}) holds true, both in the Neumann and in the Dirichlet case, because of the super-quadratic power $\beta+2$. If we want (\ref{hp:F2}) to be true with $C_{1}=0$, then in the Neumann case we have to assume that $\lambda\leq 0$, while in the Dirichlet case it is enough to assume that $\lambda\leq\lambda_{1}(\Omega)$, where $\lambda_{1}(\Omega)$ denotes the first eigenvalue of the Dirichlet Laplacian in $\Omega$.

\item  Assumption (F3). Inequality (\ref{hp:F3}) is always true with $\delta_{2}=2$ and $C_{2}=0$, for every admissible value of the parameters.

\item  Assumption (G2). From (\ref{defn:gtv-wave}) it follows that
\begin{eqnarray*}
\langle g(t,v),v\rangle_{V',V} & = & c\|v\|_{L^{\alpha+2}(\Omega)}^{\alpha+2}-\mu\|v\|_{L^{2}(\Omega)}^{2}-\int_{\Omega}h(t,x)v(x)\,dx \\[1ex]
& \geq & c\|v\|_{L^{\alpha+2}(\Omega)}^{\alpha+2}-\mu\|v\|_{L^{2}(\Omega)}^{2}-\|h(t,x)\|_{L^{2}(\Omega)}\|v\|_{L^{2}(\Omega)}.
\end{eqnarray*}

At this point inequality (\ref{hp:G2}) follows from the imbedding $L^{\alpha+2}(\Omega)\subseteq L^{2}(\Omega)$, and from the super-quadratic growth of the power $\alpha+2$.

If we want (\ref{hp:G2}) to hold true with $C_{3}=0$, we have to assume both that $\mu\leq 0$, and that $h\equiv 0$.

\item  Assumption (G3). Setting for simplicity $\sigma:=(\alpha+2)/(\alpha+1)$, we observe that $X'=L^{\sigma}(\Omega)$. From (\ref{defn:gtv-wave}) we deduce that
\begin{eqnarray}
\left|[g(t,v)](t,x)\right| & \leq & c|v(x)|^{\alpha+1}+|\mu|\cdot |v(x)|+|h(t,x)| 
\nonumber  \\[0.5ex]
& \leq & K_{1}|v(x)|^{\alpha+1}+K_{2}+|h(t,x)|.
\label{est:G2-2-wave}
\end{eqnarray}

Since $\sigma\leq 2$, when we compute the norm in $L^{\sigma}(\Omega)$ we obtain that 
\begin{equation}
\left\|K_{2}+|h(t,x)|\,\strut\right\|_{L^{\sigma}(\Omega)}\leq K_{3}+\|h\|_{L^{\sigma}(\Omega)}\leq K_{3}+K_{4}\|h\|_{L^{2}(\Omega)}\leq K_{5},
\nonumber
\end{equation}
and
\begin{equation}
\left\|K_{1}|v(x)|^{\alpha+1}\right\|_{L^{\sigma}(\Omega)}=K_{1}\|v\|_{L^{\alpha+2}(\Omega)}^{\alpha+1}.
\nonumber
\end{equation}

Plugging these two estimates into (\ref{est:G2-2-wave}) we obtain (\ref{hp:G3}).

Finally, (\ref{hp:G3}) holds true with $C_{4}=0$ of we assume that $\mu=0$ and $h\equiv 0$.

\item  Assumption (F4). Inequality (\ref{hp:F4}) holds true in the case of Dirichlet boundary conditions if $\lambda<\lambda_{1}(\Omega)$. Indeed, in this case we can apply Poincaré inequality and deduce that
\begin{equation}
F(u)\geq\frac{1}{2}\|\nabla u\|_{L^{2}(\Omega)}^{2}-\frac{\lambda}{2}\|u\|_{L^{2}(\Omega)}^{2}\geq\frac{1}{2}(\lambda_{1}(\Omega)-\lambda)\|u\|_{L^{2}(\Omega)}^{2}.
\nonumber
\end{equation}

In the case of Neumann boundary conditions, inequality (\ref{hp:F4}) holds true if $\lambda<0$ (and indeed 0 is the first eigenvalue of the Neumann Laplacian).

\item  Assumption (\ref{hp:norms}). In the regime $0<\alpha<\beta$ this inequality is always true with ${C_5}=1$ because it amounts to saying that
\begin{equation}
\int_{\Omega}|u(x)|^{\alpha+2}\,dx\leq\int_{\Omega}|u(x)|^{2}\,dx+\int_{\Omega}|u(x)|^{\beta+2}\,dx,
\nonumber
\end{equation}
which in turn follows from the inequality $y^{\alpha+2}\leq y^{2}+y^{\beta+2}$, true for every $y\geq 0$.

\end{itemize}

\paragraph{\textmd{\textit{Results}}}

We are now ready to apply our abstract theory to the semilinear wave equation (\ref{eqn:wave}). To be more precise, first we apply the results of section~\ref{sec:abstract} to strong solutions, and then we extend them to weak solutions. This can be done by approximation, because all bounds provided in section~\ref{sec:abstract} do not depend on the regularity of the solution, but just on the constants that appear in the assumptions.

\begin{prop}[Semilinear wave equation -- Universal bound/decay]\label{prop:wave}

The following statements apply to the semilinear wave equation (\ref{eqn:wave}) under the assumptions described above, and in particular in the regime $0<\alpha<\beta$, with $\beta$ satisfying (\ref{hp:beta-wave}).

\begin{enumerate}
\renewcommand{\labelenumi}{(\arabic{enumi})}

\item  In both Neumann  and Dirichlet cases, there exist two real numbers ${\Gamma}$ and $\Gamma_{\!*}$ such that any weak solution in $(0,T)$ satisfies
\begin{equation}
\|u_{t}\|_{L^{2}(\Omega)}^{2}+\|\nabla u\|_{L^{2}(\Omega)}^{2}+\|u\|_{L^{\beta+2}(\Omega)}^{\beta+2}\leq{\Gamma}\,t^{-1/\gamma}+\Gamma_{\!*}
\qquad
\forall t\in(0,T),
\label{th:wave-ubp}
\end{equation}
where $\gamma$ is defined by (\ref{defn:gamma}).

\item  In the Neumann case, if we assume that $\lambda=\mu=0$ and $h\equiv 0$, then there exists a real number ${D}$ such that any weak solution in $(0,+\infty)$ satisfies
\begin{equation}
\|u_{t}\|_{L^{2}(\Omega)}^{2}+\|\nabla u\|_{L^{2}(\Omega)}^{2}+\|u\|_{L^{\beta+2}(\Omega)}^{\beta+2}\leq{D}\,t^{-1/\gamma}
\qquad
\forall t\geq 1,
\label{th:wave-ud1}
\end{equation}
where $\gamma$ is defined by (\ref{defn:gamma-as}).

\item  In the Neumann case, if we assume that $\lambda<0$, $\mu=0$, and $h\equiv 0$, then there exists a real number ${D}$ such that any weak solution in $(0,+\infty)$ satisfies
\begin{equation}
\|u_{t}\|_{L^{2}(\Omega)}^{2}+\|u\|_{H^{1}(\Omega)}^{2}\leq{D}\,t^{-2/\alpha}
\qquad
\forall t\geq 1.
\nonumber
\end{equation}

\item  In the Dirichlet case, if we assume that $\lambda<\lambda_{1}(\Omega)$, $\mu=0$, and $h\equiv 0$, then there exists a real number ${D}$ such that any weak solution in $(0,+\infty)$ satisfies
\begin{equation}
\|u_{t}\|_{L^{2}(\Omega)}^{2}+\|\nabla u\|_{L^{2}(\Omega)}^{2}\leq{D}\,t^{-2/\alpha}
\qquad
\forall t\geq 1.
\label{th:wave-ud2-d}
\end{equation}

\end{enumerate}

\end{prop}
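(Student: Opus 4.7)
The proposition is a direct corollary of the abstract Theorems~\ref{thm:ubp}, \ref{thm:ud1}, and~\ref{thm:ud2}, applied with the specific $F$, $g$, and functional spaces chosen in (\ref{defn:gtv-wave})--(\ref{defn:HXY-wave}). The verification of the abstract hypotheses (F1)--(F4), (G1)--(G3) and (\ref{hp:norms}) has already been carried out in the bullet-point discussion preceding the statement, so for strong solutions the proof reduces to identifying, for each of the four items, which parameter restrictions make the relevant constants $C_1,\dots,C_4$ vanish and thus which abstract theorem applies.

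Concretely, for item~(1) I would invoke Theorem~\ref{thm:ubp} with no restriction on the parameters, noting that the bound (\ref{th:ubp}) on $\|u'\|_H^2+F(u)$ yields (\ref{th:wave-ubp}) since the term $-(\lambda/2)\|u\|_{L^2}^2$ in $F(u)$ is absorbed into the dominant $L^{\beta+2}$ part by Young's inequality. For item~(2) the assumptions $\lambda=\mu=0$ and $h\equiv 0$ in the Neumann case force $C_1=\dots=C_4=0$, so Theorem~\ref{thm:ud1} applies directly. For items~(3) and~(4) the additional condition $\lambda<0$ (Neumann) or $\lambda<\lambda_1(\Omega)$ (Dirichlet) provides (F4) via Poincar\'e's inequality, while (\ref{hp:norms}) follows at once from $y^{\alpha+2}\leq y^2+y^{\beta+2}$ in the regime $0<\alpha<\beta$; Theorem~\ref{thm:ud2} then delivers the $t^{-2/\alpha}$ decay of $\|u'\|_H^2+F(u)$, from which (\ref{th:wave-ud2-d}) follows after recognizing $F(u)\geq c\|u\|_V^2$ under these hypotheses.

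The main obstacle, and the only step not reducible to invoking abstract theorems, is the passage from strong to weak solutions. My plan here is approximation: given weak data $(u_0,u_1)\in V\times H$, regularize by sequences $(u_{0,n},u_{1,n},h_n)$ of smooth data for which the strong existence results cited from~\cite{HDieudo} produce solutions $u_n$. Because the estimates of Section~\ref{sec:abstract} depend only on the structural constants of the equation and not on the initial data, they hold uniformly in $n$. Passing to the limit via weak-$*$ compactness, together with the Sobolev imbedding $H^1(\Omega)\hookrightarrow L^{\beta+2}(\Omega)$ (compact whenever the inequality in (\ref{hp:beta-wave}) is strict) to handle the nonlinear restoring term, and Minty's monotonicity trick for the damping $|u_t|^\alpha u_t$, one identifies the limit with the weak solution starting from $(u_0,u_1)$; lower semicontinuity of norms then transfers the estimates to the limit. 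The delicate point is the borderline case $(N-2)\beta=2$, where compactness fails and one must rely entirely on the monotone structure of both nonlinearities, but this monotonicity still suffices to conclude.
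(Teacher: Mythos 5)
Your proposal is correct and follows essentially the same route as the paper: verify the abstract hypotheses (F1)--(F4), (G1)--(G3) and (\ref{hp:norms}) as in the bullet-point discussion, apply Theorems~\ref{thm:ubp}, \ref{thm:ud1} and~\ref{thm:ud2} to strong solutions with the appropriate parameter restrictions making the $C_i$ vanish, and transfer the estimates to weak solutions by approximation, using that the constants depend only on the structural data. In fact you spell out the strong-to-weak limit argument (compactness, Minty's trick, lower semicontinuity) in more detail than the paper, which simply asserts that the approximation works because the bounds are regularity-independent.
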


\begin{rmk}
\begin{em}
Let us comment on the bounds obtained in Proposition~\ref{prop:wave} above.
\begin{itemize}

\item Concerning (\ref{th:wave-ubp}), the bounds obtained for $\nabla u$ and $u_{t}$ are worse, as $t\to 0^{+}$, than the bound obtained for $u$. We do not know whether this corresponds to a real phenomenon or it is just due to our technique.

\item Concerning (\ref{th:wave-ud1}), the bounds obtained for $\nabla u$ and $u_{t}$ are better, as $t\to +\infty$, than the bound obtained for $u$. Again we do not know whether this corresponds to a real phenomenon or it is just due to our technique. One might think of a kind of homogeneization effet.

\end{itemize}

In general, the optimality of these decay rates is a challenging open problem. We refer to section~\ref{sec:open} for further comments.

\end{em}
\end{rmk}


\subsection{Semilinear plate equations}

Let us consider, in a cylinder $(0,T)\times\Omega$ or $(0,+\infty)\times\Omega$, semilinear plate equations of the form
\begin{equation}
u_{tt}+\Delta^{2} u+b|u|^{\beta}u-\lambda u+c|u_{t}|^{\alpha}u_{t}-\mu u_{t}=h,
\label{eqn:plate}
\end{equation}
where $\alpha$, $\beta$, $b$, $c$ are positive real parameters, and $\lambda$, $\mu$ are real parameters. Let us add initial conditions (\ref{initial-data}), and either hinged boundary conditions
\begin{equation}
u(t,x)=\Delta u(t,x)=0
\qquad
\mbox{in }(0,T)\times\partial\Omega,
\nonumber
\end{equation}
or clamped boundary conditions
\begin{equation}
u(t,x)=\frac{\partial u}{\partial n}(t,x)=0
\qquad
\mbox{in }(0,T)\times\partial\Omega.
\nonumber
\end{equation}

If we assume that 
\begin{equation}
(N-4)\beta\leq 4,
\label{hp:beta-plate}
\end{equation}
then $H^{2}(\Omega)$ is continuously imbedded into $L^{2\beta+2}(\Omega)$. In this case the initial-boundary-value problem is classically well-posed for initial data in $(H^{2}(\Omega)\cap H^{1}_{0}(\Omega))\times L^{2}(\Omega)$ in the case of hinged boundary conditions, and for initial data in $H^{2}_{0}(\Omega)\times L^{2}(\Omega)$ in the case of clamped boundary conditions. Again ``well-posedness'' refers to weak solutions, while strong solutions exist under additional regularity assumptions on the initial data and on the forcing term $h$.

This problem fits in the abstract framework of section~\ref{sec:functional} if we define $g(t,v)$ as in (\ref{defn:gtv-wave}), we consider the spaces $H$, $X$, $Y$ as in (\ref{defn:HXY-wave}), and then we set
\begin{equation}
F(u):=\frac{1}{2}\|\Delta u\|_{L^{2}(\Omega)}^{2}-\frac{\lambda}{2}\|u\|_{L^{2}(\Omega)}^{2}+\frac{b}{\beta+2}\|u\|_{L^{\beta+2}(\Omega)}^{\beta+2},
\nonumber
\end{equation}
and $V:=H^{2}(\Omega)\cap H^{1}_{0}(\Omega)$ in the case of hinged boundary conditions, or $V:=H^{2}_{0}(\Omega)$ in the case of clamped boundary conditions.

The verification of the abstract assumptions of section~\ref{sec:functional} is analogous to the case of the semilinear wave equation. As a consequence, from Theorem~\ref{thm:ubp} and Theorem~\ref{thm:ud1} we obtain the following result (first for strong solutions, and then for weak solutions by a density argument).

\begin{prop}[Semilinear plate equation -- Universal bound/decay]

The following statements apply to the semilinear plate equation (\ref{eqn:plate}) under the assumptions described above, and in particular in the regime $0<\alpha<\beta$ with $\beta$ satisfying (\ref{hp:beta-plate}).

\begin{enumerate}
\renewcommand{\labelenumi}{(\arabic{enumi})}

\item  Both in the clamped and in the hinged case, there exist two real numbers ${\Gamma}$ and $\Gamma_{\!*}$ such that any weak solution in $(0,T)$ satisfies
\begin{equation}
\|u_{t}\|_{L^{2}(\Omega)}^{2}+\|\Delta u\|_{L^{2}(\Omega)}^{2}+\|u\|_{L^{\beta+2}(\Omega)}^{\beta+2}\leq{\Gamma}\,t^{-1/\gamma}+\Gamma_{\!*}
\qquad
\forall t\in(0,T),
\nonumber
\end{equation}
where $\gamma$ is defined by (\ref{defn:gamma}).

\item  Both in the clamped and in the hinged case, if we assume in addition that $\mu=0$, $\lambda<\lambda_{1}(\Omega)$ (where now $\lambda_{1}(\Omega)$ is the first eigenvalue of the bi-Laplacian with the corresponding boundary conditions), and $h\equiv 0$, then there exists a real number ${D}$ such that any weak solution in $(0,+\infty)$ satisfies
\begin{equation}
\|u_{t}\|_{L^{2}(\Omega)}^{2}+\|u\|_{H^{2}(\Omega)}^{2}\leq{D}\,t^{-2/\alpha}
\qquad
\forall t\geq 1.
\nonumber
\end{equation}

\end{enumerate}

\end{prop}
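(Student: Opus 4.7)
The plan is to recast the initial-boundary-value problem (\ref{eqn:plate}) into the abstract setting of section~\ref{sec:functional} and then to invoke Theorem~\ref{thm:ubp} for part~(1) and Theorem~\ref{thm:ud2} for part~(2). I will take $H=L^{2}(\Omega)$, $X=L^{\alpha+2}(\Omega)$, $Y=L^{\beta+2}(\Omega)$ exactly as in (\ref{defn:HXY-wave}), and $V:=H^{2}(\Omega)\cap H^{1}_{0}(\Omega)$ in the hinged case, $V:=H^{2}_{0}(\Omega)$ in the clamped case. The growth restriction (\ref{hp:beta-plate}) guarantees the Sobolev imbedding $H^{2}(\Omega)\hookrightarrow L^{2\beta+2}(\Omega)$, which in turn gives $V\subseteq Y\subseteq X\subseteq H$; the dual inclusions follow by duality, so the chain~(\ref{7-spaces}) is in place. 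The nonlinear term $g(t,v)$ is defined as in~(\ref{defn:gtv-wave}).

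The verification of (F1)--(F3) and (G1)--(G3) parallels the wave-equation case almost verbatim, the only change being that $\|\nabla u\|_{L^{2}(\Omega)}^{2}$ is replaced by $\|\Delta u\|_{L^{2}(\Omega)}^{2}$ in the formula for $F$. The estimates on $g$ are unaffected, since $g$ does not involve the spatial principal part. Assumption~(F2) holds because the super-quadratic term $\|u\|_{L^{\beta+2}(\Omega)}^{\beta+2}$ dominates $-(\lambda/2)\|u\|_{L^{2}(\Omega)}^{2}$ at infinity; and assumption~(F3) holds with $\delta_{2}=2$ and $C_{2}=0$ because every integrand in $\langle\nabla F(u),u\rangle_{V',V}$ is homogeneous of degree at least two in $u$. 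With these hypotheses in hand, part~(1) follows directly from Theorem~\ref{thm:ubp} applied to strong solutions.

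For part~(2) I need in addition (F4) and the norm comparison~(\ref{hp:norms}). The latter reduces to the pointwise inequality $y^{\alpha+2}\leq y^{2}+y^{\beta+2}$ for $y\geq 0$, exactly as in the wave case. For (F4), the key input is the bi-Laplacian Poincar\'e inequality $\|\Delta u\|_{L^{2}(\Omega)}^{2}\geq\lambda_{1}(\Omega)\|u\|_{L^{2}(\Omega)}^{2}$ for $u\in V$, valid under either set of boundary conditions; combined with the hypothesis $\lambda<\lambda_{1}(\Omega)$ this yields
\begin{equation*}
F(u)\geq\tfrac{1}{2}\|\Delta u\|_{L^{2}(\Omega)}^{2}-\tfrac{\lambda}{2}\|u\|_{L^{2}(\Omega)}^{2}\geq\tfrac{1}{2}(\lambda_{1}(\Omega)-\lambda)\|u\|_{L^{2}(\Omega)}^{2},
\end{equation*}
so (F4) holds with $\delta_{4}=(\lambda_{1}(\Omega)-\lambda)/2>0$. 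The assumptions $\mu=0$ and $h\equiv 0$ force $C_{3}=C_{4}=0$, so Theorem~\ref{thm:ud2} applies and produces the asserted $t^{-2/\alpha}$ decay for strong solutions.

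The final step, as in the wave case, is to pass from strong to weak solutions by a density argument. The main obstacle I anticipate is not computational but organisational: since the universal bounds are pointwise in $t$, one must check that regularizing the initial data produces a sequence of strong solutions whose energies at each fixed $t>0$ can be controlled in the limit. This is handled because the bounds in Theorems~\ref{thm:ubp} and~\ref{thm:ud2} are independent of the initial data, while weak solutions depend continuously on time into the weak topologies of $V$ and $H$, so weak lower-semicontinuity of the relevant norms transfers the estimate to the weak limit at every $t>0$.
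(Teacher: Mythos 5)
Your proposal is correct and follows essentially the same route as the paper: cast the plate equation in the abstract framework with the same spaces $H$, $X$, $Y$ from (\ref{defn:HXY-wave}), $V=H^{2}(\Omega)\cap H^{1}_{0}(\Omega)$ or $H^{2}_{0}(\Omega)$, and $F$ with $\|\Delta u\|_{L^{2}(\Omega)}^{2}$ in place of $\|\nabla u\|_{L^{2}(\Omega)}^{2}$, verify (F1)--(F4), (G1)--(G3) and (\ref{hp:norms}) exactly as in the wave case, apply the abstract theorems to strong solutions, and pass to weak solutions by density. Your appeal to Theorem~\ref{thm:ud2} for part~(2), with (F4) coming from the bi-Laplacian eigenvalue inequality under $\lambda<\lambda_{1}(\Omega)$, is indeed the mechanism that produces the $t^{-2/\alpha}$ rate and the $H^{2}$ bound intended by the paper.
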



\subsection{Quasilinear equations of Kirchhoff type} 

Let us consider, in a cylinder $(0,T)\times\Omega$ or $(0,+\infty)\times\Omega$, quasilinear integro-differential equations with averaged damping of the form
\begin{equation}
u_{tt}-\Delta u -b\left(\int_\Omega |\nabla u|^2\,dx\right )^{\beta/2}\Delta u + c\left(\int_\Omega |u_{t}|^2\, dx\right)^{\alpha/2} u_{t} -\lambda u-\mu u_{t}=h,
\label{eqn:kirchhoff}
\end{equation}
where $\alpha$, $\beta$, $b$, $c$ are positive real parameters, and $\lambda$, $\mu$ are real parameters. Let us add initial conditions (\ref{initial-data}), and homogeneous Dirichlet boundary conditions (Neumann boundary conditions are not allowed in this case, as shown in Remark~\ref{rmk:n-kirchhoff} below).

This problem fits in the form of (\ref{eqn:abstract}) if we set
\begin{equation}
F(u):=\frac{1}{2}\|\nabla u\|_{L^{2}(\Omega)}^{2}+\frac{b}{\beta+2}\|\nabla u\|_{L^{2}(\Omega)}^{\beta+2}-\frac{\lambda}{2}\|u\|_{L^{2}(\Omega)}^{2},
\label{defn:F-kirchhoff}
\end{equation}
and
\begin{equation}
[g(t,v)](t,x):=c\|v\|_{L^{2}(\Omega)}^{\alpha} v(x)-\mu v(x)+h(t,x).
\nonumber
\end{equation}

As for the functional spaces, we choose $H=X=L^{2}(\Omega)$, $V:=H^{1}_{0}(\Omega)$, and $Y$ any space between $V$ and $H$, endpoints included. The verification of the abstract assumptions of section~\ref{sec:functional} is similar, and sometimes simpler, to the case of the semilinear wave equation. We leave the details to the interested reader.

Existence of weak or strong global solutions to these equations is a big open problem, and it is known to be true only in special cases, for example when both the initial data and the forcing term are analytic and satisfy suitable compatibility conditions. We refer to~\cite{GG:k-decay,GG:SNS,GG:k-Nishihara} for further details. To remain on the safe side, we can assume that both  initial state $(u_{0},u_{1})$ and forcing term $h(t,x)$ are finite linear combinations of eigenfunctions of the Dirichlet Laplacian. In this case the problem is equivalent to a finite system of ordinary differential equations, and existence of global strong solutions in the sense of Definition~\ref{defn:strongsol} is substantially trivial. 

In any case the functions $F$ and $g$ fit in the abstract framework of  section~\ref{sec:functional}, and therefore all (weak or strong) solutions, provided they exist, satisfy the universal bound/decay properties in the energy space, as follows.

\begin{prop}[Quasilinear Kirchhoff equation -- Universal bound/decay]

The following statements apply to the quasilinear Kirchhoff equation (\ref{eqn:kirchhoff}) under the assumptions described above, and in particular in the regime $0<\alpha<\beta$ with Dirichlet boundary conditions.

\begin{enumerate}
\renewcommand{\labelenumi}{(\arabic{enumi})}

\item   There exist two real numbers ${\Gamma}$ and $\Gamma_{\!*}$ such that any weak solution in $(0,T)$ satisfies
\begin{equation}
\|u_{t}\|_{L^{2}(\Omega)}^{2}+\|\nabla u\|_{L^{2}(\Omega)}^{\beta+2}\leq{\Gamma}\,t^{-1/\gamma}+\Gamma_{\!*}
\qquad
\forall t\in(0,T),
\nonumber
\end{equation}
where $\gamma$ is defined by (\ref{defn:gamma}).

\item  If we assume in addition that $\mu=0$, $\lambda<\lambda_{1}(\Omega)$ (where $\lambda_{1}(\Omega)$ is the first eigenvalue of the Dirichlet Laplacian), and $h\equiv 0$, then there exists a real number ${D}$ such that any weak solution in $(0,+\infty)$ satisfies
\begin{equation}
\|u_{t}\|_{L^{2}(\Omega)}^{2}+\|\nabla u\|_{L^{2}(\Omega)}^{2}\leq{D}\,t^{-2/\alpha}
\qquad
\forall t\geq 1.
\nonumber
\end{equation}

\end{enumerate}

\end{prop}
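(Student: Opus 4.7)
The plan is to verify that with the functional setting $H=X=L^2(\Omega)$, $V=H^1_0(\Omega)$, and $Y$ any intermediate space between $V$ and $H$, together with the stated $F$ and $g$, all the hypotheses of section~\ref{sec:functional} are satisfied, so that Theorem~\ref{thm:ubp} yields item~(1) and Theorem~\ref{thm:ud2} yields item~(2). Since the quantitative bounds produced by those theorems depend only on the structural constants and not on the regularity of the solution, the passage from strong solutions to weak solutions would be performed by the same density/approximation argument as in the semilinear wave case (conditional on existence, which is itself a major open problem outside the special analytic setting).

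For part~(1), I would verify (F1)--(F3) and (G1)--(G3) in turn. (F1) is immediate from the explicit expression of $F$ in (\ref{defn:F-kirchhoff}). For (F2), the superlinear term $\tfrac{b}{\beta+2}\|\nabla u\|_{L^2}^{\beta+2}$ dominates, and since $Y\subseteq V$ gives $\|u\|_Y\leq K\|\nabla u\|_{L^2}$, the indefinite term $-\tfrac{\lambda}{2}\|u\|_{L^2}^2$ is absorbed by Poincaré into a constant $C_1$. The decisive point for (F3) is a direct computation yielding $\langle\nabla F(u),u\rangle-2F(u)=\tfrac{b\beta}{\beta+2}\|\nabla u\|_{L^2}^{\beta+2}\geq 0$, so (\ref{hp:F3}) holds with $\delta_2=2$ and $C_2=0$ regardless of $\lambda$. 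For (G1)--(G3) the analysis is even simpler than in the wave case because $X=L^2$: the coercive term $c\|v\|_{L^2}^{\alpha+2}$ in $\langle g(t,v),v\rangle$ absorbs $-\mu v$ and $h$ via Young's inequality to give (\ref{hp:G2}), while the pointwise bound on $g(t,v)$ combined with the elementary estimate $|\mu|\|v\|_{L^2}\leq|\mu|(1+\|v\|_{L^2}^{\alpha+1})$ yields (\ref{hp:G3}). Applying Theorem~\ref{thm:ubp} and using (F2) to extract $\|\nabla u\|_{L^2}^{\beta+2}$ from $F(u)$ delivers the stated bound.

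For part~(2), I would strengthen those estimates so that $C_1=C_2=C_3=C_4=0$, and verify (F4) and the norms condition~(\ref{hp:norms}). With $\mu=0$ and $h\equiv 0$, the constants $C_3$ and $C_4$ vanish automatically. The assumption $\lambda<\lambda_1(\Omega)$ combined with Poincaré gives $F(u)\geq\tfrac{1}{2}(1-\lambda/\lambda_1(\Omega))\|\nabla u\|_{L^2}^2+\tfrac{b}{\beta+2}\|\nabla u\|_{L^2}^{\beta+2}$, which kills $C_1$ in (F2), preserves (F3) with $C_2=0$, and provides (F4) with $\delta_4=\tfrac{1}{2}(\lambda_1(\Omega)-\lambda)>0$ via Poincaré. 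The norms condition is trivial since $X=H$ reduces (\ref{hp:norms}) to the elementary inequality $y^{\alpha+2}\leq y^2+y^{\beta+2}$, valid for $y\geq 0$ in the regime $0<\alpha<\beta$. Theorem~\ref{thm:ud2} then yields $\|u_t\|_{L^2}^2+F(u)\leq D\,t^{-2/\alpha}$, and the $H^1$-coercivity lower bound for $F$ just derived gives the stated form.

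No single step is genuinely hard. The main pitfall lies in the bookkeeping for part~(1), namely in checking that the three perturbative contributions $-\lambda u$, $-\mu u_t$, and $h$ can all be safely absorbed into the constants $C_1,\ldots,C_4$ without interfering with the superlinear leading structure that drives the whole abstract machinery; once this is done, the universal bound and decay drop out of the abstract theorems with essentially no further work.
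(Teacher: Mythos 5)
Your proposal is correct and follows exactly the route the paper intends: verify (F1)--(F3), (G1)--(G3) (and, for the decay statement, (F4) and (\ref{hp:norms}) with all $C_i=0$) for the choices $H=X=L^2(\Omega)$, $V=H^1_0(\Omega)$, $F$ as in (\ref{defn:F-kirchhoff}), then invoke Theorem~\ref{thm:ubp} for item (1) and Theorem~\ref{thm:ud2} for item (2), extending to weak solutions by approximation. The paper itself leaves these verifications ``to the interested reader,'' so your write-up (including the key identity $\langle\nabla F(u),u\rangle-2F(u)=\tfrac{b\beta}{\beta+2}\|\nabla u\|_{L^2}^{\beta+2}\ge 0$ and the Poincar\'e-based coercivity giving (F4)) is in fact more explicit than the paper's own argument.
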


\begin{rmk}\label{rmk:n-kirchhoff}
\begin{em}
The universal bound/decay properties do not hold for (\ref{eqn:kirchhoff}) with homogeneous Neumann boundary conditions. This is evident when $\lambda=0$ and $h\equiv 0$, in which case all constant functions are solutions. From the technical point of view, we observe that in the Neumann case there is no choice of the function space $Y$ that guarantees that the function $F(u)$ defined by (\ref{defn:F-kirchhoff}) satisfies (\ref{hp:F2}), because there is no way to control $u$ in terms of $\nabla u$.
\end{em}
\end{rmk}

\begin{rmk}
\begin{em}

Several variants of (\ref{eqn:kirchhoff}) fit into our abstract framework. We just mention the degenerate hyperbolic equation
\begin{equation}
\label{wave-Kircheasy}
u_{tt}  -\left (\int _\Omega |\nabla u|^2 dx\right )^{\beta/2}\Delta u +  \left (\int _\Omega |u_{t}|^2 dx\right )^{\alpha/2}u_{t}=0, 
\end{equation}
even with ``local'' damping
\begin{equation}
u_{tt}  - \left(\int _\Omega |\nabla u|^2 dx\right )^{\beta/2}\Delta u + |u_{t}|^{\alpha}u_{t}=0.
\nonumber 
\end{equation} 

Existence of global solutions is a widely open problem, if we exclude very special cases such as finite linear combinations of eigenfunctions for the first equation (see~\cite{1996-JMAA-MatIke} for a non-degenerate equation with local nonlinear dissipation). For all these equations our theory provides the universal bound and the universal decay for all solutions that are proved to exist and do satisfy the nergy identities.

\end{em}
\end{rmk}
  

\section {Negative results and open problems}\label{sec:open}

We already know that the universal bound property of Theorem~\ref{thm:ubp} may fail when $0\leq\beta\leq\alpha$. The basic example is the result in~\cite{1998-DIE-Souplet} concerning the ordinary differential equation (\ref{ode:souplet}). This result has some simple spin-offs in terms of partial differential equations, for example
\begin{itemize}

\item  in the case of semilinear wave equations of the form (\ref{wave})
with homogeneous Neumann boundary conditions in a bounded open set, with initial data that do not depend on space variables (indeed also the solution does not depend on space variables, and the equation reduces to (\ref{ode:souplet})),

\item  in the case of the Kirchhoff equation (\ref{wave-Kircheasy}) with Dirichlet boundary conditions and initial data that are multiples of the same eigenfunction of the Dirichlet Laplacian (indeed the solution is the product of the eigenfunction itself with a solution to a scalar ordinary differential equation of the form (\ref{ode:souplet})). 

\end{itemize}

A less trivial example where the universal bound property is known to fail is the case of equation
\begin{equation}
u_{tt} -\Delta u +|u_{t}|^{\alpha}u_{t}=0
\label{eqn:nl-damping}
\end{equation}
with homogeneous Dirichlet boundary conditions in a bounded open set with smooth boundary (see~\cite[Theorem~1]{1994-JMPA-Carpio}). This example motivates the following question, whose answer does not seem to follow from any result presently established.

\begin{open}
\begin{em}

Does there exist a counterexample to the uniform bound/decay property for solutions to equation (\ref{wave}) with Dirichlet boundary conditions, of course in the regime $0\leq\beta\leq\alpha$?

\end{em}
\end{open}

Another interesting question concerns the optimality of the decay estimates, for example in the case of the semilinear wave equation. In the case of Neumann boundary conditions, the consideration of solutions that are constant in space reveals that the estimates for $u$ and $u_{t}$ provided by (\ref{th:wave-ud1}) are optimal (it is the trivial sense of the energy inequality). The optimality of the estimates on $\nabla u$, as well as the case of Dirichlet boundary conditions, remains a challenging problem, even in one space dimension.

\begin{open}
\begin{em}

Are the estimates on $|\nabla u|$ provided by (\ref{th:wave-ud1}) optimal?
Are the estimates on $u$, $u_{t}$ and $|\nabla u|$ provided by (\ref{th:wave-ud2-d}) optimal?

\end{em}
\end{open}

The last question above is connected to the classical problem concerning the decay rate of individual solutions to the simpler equation (\ref{eqn:nl-damping}). We refer to~\cite[Problem 4.1]{HPr} for further details on this problem.


\subsubsection*{\centering Acknowledgments}

The first two authors are members of the ``Gruppo Nazionale per l'Analisi Matematica, la Probabilità e le loro Applicazioni'' (GNAMPA) of the ``Istituto Nazionale di Alta Matematica'' (INdAM). 



\begin{thebibliography}{10}
\providecommand{\url}[1]{\texttt{#1}}
\providecommand{\urlprefix}{URL }
\providecommand{\selectlanguage}[1]{\relax}
\providecommand{\eprint}[2][]{\url{#2}}

\bibitem{A^2H}
\textsc{M.~Abdelli}, \textsc{M.~Anguiano}, \textsc{A.~Haraux}.
\newblock Existence, uniqueness and global behavior of the solutions to some
  nonlinear vector equations in a finite dimensional {H}ilbert space.
\newblock \emph{Nonlinear Anal.} \textbf{161} (2017), 157--181.

\bibitem{A-H}
\textsc{M.~Abdelli}, \textsc{A.~Haraux}.
\newblock The universal bound property for a class of second order odes.
\newblock ArXiv:1811.02212.

\bibitem{AH}
\textsc{M.~Abdelli}, \textsc{A.~Haraux}.
\newblock Global behavior of the solutions to a class of nonlinear, singular
  second order {ODE}.
\newblock \emph{Nonlinear Anal.} \textbf{96} (2014), 18--37.

\bibitem{1994-JMPA-Carpio}
\textsc{A.~Carpio}.
\newblock Existence of global solutions to some nonlinear dissipative wave
  equations.
\newblock \emph{J. Math. Pures Appl. (9)} \textbf{73} (1994), no.~5, 471--488.

\bibitem{GG:k-decay}
\textsc{M.~Ghisi}, \textsc{M.~Gobbino}.
\newblock Hyperbolic-parabolic singular perturbation for mildly degenerate
  {K}irchhoff equations: time-decay estimates.
\newblock \emph{J. Differential Equations} \textbf{245} (2008), no.~10,
  2979--3007.

\bibitem{GG:SNS}
\textsc{M.~Ghisi}, \textsc{M.~Gobbino}.
\newblock Derivative loss for {K}irchhoff equations with non-{L}ipschitz
  nonlinear term.
\newblock \emph{Ann. Sc. Norm. Super. Pisa Cl. Sci. (5)} \textbf{8} (2009),
  no.~4, 613--646.

\bibitem{GG:k-Nishihara}
\textsc{M.~Ghisi}, \textsc{M.~Gobbino}.
\newblock Kirchhoff equations from quasi-analytic to spectral-gap data.
\newblock \emph{Bull. Lond. Math. Soc.} \textbf{43} (2011), no.~2, 374--385.

\bibitem{HPr}
\textsc{A.~Haraux}.
\newblock Some simple problems for the next generations.
\newblock ArXiv:1512.06540.

\bibitem{HDieudo}
\textsc{A.~Haraux}.
\newblock Semi-linear hyperbolic problems in bounded domains.
\newblock \emph{Math. Rep.} \textbf{3} (1987), no.~1, i--xxiv and 1--281.

\bibitem{Rio}
\textsc{A.~Haraux}.
\newblock \emph{Nonlinear vibrations and the wave equation}.
\newblock SpringerBriefs in Mathematics. Springer, Cham, 2018.
\newblock BCAM SpringerBriefs.

\bibitem{1996-JMAA-MatIke}
\textsc{T.~Matsuyama}, \textsc{R.~Ikehata}.
\newblock On global solutions and energy decay for the wave equations of
  {K}irchhoff type with nonlinear damping terms.
\newblock \emph{J. Math. Anal. Appl.} \textbf{204} (1996), no.~3, 729--753.

\bibitem{1975-SNS-Simon}
\textsc{J.~Simon}.
\newblock Quelques propri\'{e}t\'{e}s de solutions d'\'{e}quations et
  d'in\'{e}quations d'\'{e}volution paraboliques non lin\'{e}aires.
\newblock \emph{Ann. Scuola Norm. Sup. Pisa Cl. Sci. (4)} \textbf{2} (1975),
  no.~4, 585--609.

\bibitem{1998-DIE-Souplet}
\textsc{P.~Souplet}.
\newblock Critical exponents, special large-time behavior and oscillatory
  blow-up in nonlinear {ODE}'s.
\newblock \emph{Differential Integral Equations} \textbf{11} (1998), no.~1,
  147--167.

\bibitem{Temam}
\textsc{R.~Temam}.
\newblock \emph{Infinite-dimensional dynamical systems in mechanics and
  physics}, \emph{Applied Mathematical Sciences}, volume~68.
\newblock Springer-Verlag, New York, 1988.

\end{thebibliography}


\end{document}